\documentclass[onefignum,onetabnum]{siamart171218}



\usepackage{lipsum}
\usepackage{amsfonts}
\usepackage{graphicx}
\usepackage{epstopdf}
\usepackage{algorithmic}
\ifpdf
  \DeclareGraphicsExtensions{.eps,.pdf,.png,.jpg}
\else
  \DeclareGraphicsExtensions{.eps}
\fi


\newcommand{\norm}[1]{\left\lVert#1\right\rVert} 

\newsiamremark{remark}{Remark}
\newsiamremark{hypothesis}{Hypothesis}
\crefname{hypothesis}{Hypothesis}{Hypotheses}
\newsiamthm{claim}{Claim}

\headers{Shape Optimization using MultiMesh FEM with Nitsche coupling}{J. S. Dokken, S. W. Funke, A. Johansson, and S. Schmidt}

\title{Shape Optimization using the Finite Element Method on Multiple Meshes with Nitsche coupling\thanks{Submitted to the editors DATE.}
\funding{This work was supported by the Research Council of Norway through a FRIPRO grant, project number 251237.}}

\author{J\O rgen S. Dokken\thanks{Simula Research Laboratory, Lysaker, Norway
  (\email{dokken@simula.no}).}
  \and Simon W. Funke \footnotemark[2]
  \and August Johansson \footnotemark[2]
  \and Stephan Schmidt \thanks{Universit\"{a}t W\"{u}rzburg, Germany}.}

\usepackage{amsopn}

\usepackage{tikz}
\crefname{section}{Section}{Sections}
\Crefname{algocf}{Algorithm}{Algorithms}
\crefname{equation}{Equation}{Equations}
\crefname{table}{Table}{Tables}
\crefname{figure}{Figure}{Figures}

\newcommand{\Rb}{\mathbb{R}}
\newcommand{\example}[1]{\textbf{Example:} {\itshape #1}}
\newcommand{\examplecont}[1]{\textbf{Example (cont.):} {\itshape #1}}
\newcommand\Int[2]{\int\limits_{ #1}^{#2}} 
\newcommand\der[2]{\frac{\partial #1}{\partial #2}} 
\newcommand\totder[2]{\frac{\mathrm{d}#1}{\mathrm{d} #2}} 
\newcommand{\md}{\ \mathrm{d}} 
\newcommand{\dOmega}{\partial\Omega}
\usepackage{pdfpages}
\graphicspath {{.}}
\newcommand{\inv}[1]{#1^{-1}} 
\newcommand{\half}{\frac{1}{2}}

\usepackage{stmaryrd}

\usepackage[section]{placeins}


\ifpdf
\hypersetup{
  pdftitle={Shape Optimization using the Finite Element Method on Multiple Meshes},
  pdfauthor={J.S. Dokken}
}
\fi


\externaldocument{ex_supplement}


\begin{document}

\maketitle

\begin{abstract}
  An important step in shape optimization with partial differential equation
  constraints is to adapt the geometry during each optimization iteration.
  Common strategies are to employ mesh-deformation or re-meshing, where one or the other typically lacks robustness or is computationally expensive. This
    paper proposes a different approach, in which the computational domain is
    represented by multiple, independent meshes.
    A Nitsche based finite element method is used to weakly enforce continuity
    over the non-matching mesh interfaces. The optimization is preformed using
    an iterative gradient method, in which the shape-sensitivities are obtained
    by employing the Hadamard formulas and the adjoint approach.
    An optimize-then-discretize approach is chosen due to its independence
    of the FEM framework. Since the individual meshes may be moved freely,
    re-meshing or mesh deformations
    can be entirely avoided in cases where the geometry changes consists of
    rigid motions or scaling. By this free movement, we obtain robust and
    computational cheap mesh adaptation for optimization problems even for
    large domain changes. For general geometry changes, the method can be
    combined with mesh-deformation or re-meshing techniques to reduce the amount
    of deformation required.  We demonstrate the capabilities of the method
    on several examples, including the optimal placement of heat emitting wires
    in a cable to minimize the chance of overheating, the drag
    minimization in Stokes flow, and the orientation of 25 objects in a Stokes flow.
\end{abstract}

\begin{keywords}
  Shape Optimization, Finite Element Methods, MultiMesh FEM.
\end{keywords}

\begin{AMS}
  35Q93, 
  49Q10, 
  65M85, 
  65N30, 
  68N99. 
\end{AMS}

\section{Introduction}
During the last two decades, there has been a transition from simulation to
coupling of optimization and simulation~\cite{tekin2004simulation}.
Of particular industrial relevance are shape optimization problems, which aim
to optimize the shape of an object subject to physical constraints, typically
described by partial differential equations (PDEs).
Examples of industrial problems that have been modeled are the drag minimization of airplanes and cars
~\cite{mohammadi2010applied,nielsen2006adjoint,reuther1999constrained}, the shape optimization of acoustic horns~\cite{schmidt2016large}, and the optimal design of current carrying multi-cables~\cite{harbrecht2016}.
The success of these applications is driven by efficient optimization algorithms and fast methods for solving PDEs.
More specifically, gradient-based optimization methods have shown to converge quickly and often independent of the number of design variables. The required shape gradients are derived through shape calculus and the adjoint PDE~\cite{delfour2011shapes,schmidt2013three,sokolowski1992introduction}. The Finite Element Method (FEM) is an efficient and flexible method for solving a wide range of PDEs. In the last decades, this method has gained popularity in both the scientific and industrial environment due to its mathematical foundation and geometrical flexibility.

A critical part in shape optimization algorithms is handling of geometry changes during each optimization iteration. For FEM based models this means that the computational mesh must be updated to a new target geometry at low cost while maintaining a high mesh quality.
Two commonly used mesh updating strategies are mesh deformation and re-meshing.
Mesh deformation methods often involve the solution of an auxiliary PDE. However, the mesh quality may degrade or even degenerate for large deformations. Several deformation schemes have therefore been proposed to handle large deformations~\cite{schmidt2014two,stein2003mesh} of the expense of a high computational cost.
In contrast, a re-meshing strategy produces meshes that are guaranteed to be regular for any geometrical change. However, drawbacks are that the geometry must be reconstructed from the mesh to allow for re-meshing of the boundary elements, and the high computational cost of the meshing algorithms~\cite{boggs2005dart}.

To overcome these limitations, we propose a shape optimization algorithm based on the idea to represent the domain by multiple, non-matching meshes, as illustrated in \cref{fig:Ball}. Because our method is highly embedded in the finite element setting, we call it MultiMesh as opposed to existing approaches like Chimera and Overset methods. Each mesh can be freely rotated, scaled or translated at a low computational cost without impacting the mesh quality.
In an optimization setting, this means that mesh updates can be completely eliminated in cases where the goal is to optimally rotate, scale or translate of objects within a larger geometry. Further, as we will show in the numerical examples, the MultiMesh approach is beneficial for general shape optimization. Applying mesh deformation or re-meshing to a MultiMesh is computationally cheaper and more robust than the traditional single mesh approach, since it is only applied on submeshes. To the best of the authors' knowledge, this is the first instance of a FEM with multiple overlapping meshes in the setting of shape optimization.

\begin{figure}[!ht]
  \centering
  \begin{tikzpicture}
    \node (sm) [above right, inner sep=0pt]{
      \includegraphics[width=\linewidth]{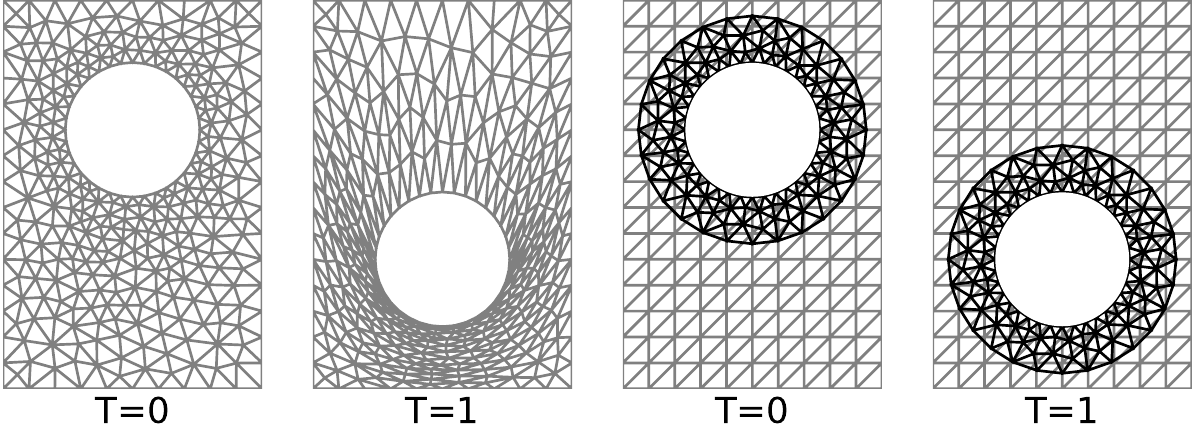}};
    \node (A) at (0.27\linewidth,-0.2) {(a)};
    \node (B) at (0.775\linewidth,-0.2) {(b)};
  \end{tikzpicture}

  \caption{A comparison of a moving object described with a standard mesh and
    with multiple meshes. In (a), the mesh is deformed with an
    Eikonal convection equation, combined with a centroidal Voronoi
    tessellation (CVT) smoothing~\cite{schmidt2014two}. The mesh quality,
    quantified by the minimum radius ratio decreases from $0.75$
    to $6\cdot 10^{-4}$, and the mesh is degenerated.
    In (b), a mesh describing the ball is
    introduced, and can be translated independent of the background mesh. Here
    the minimum radius ratio is constant at $0.72$.
    }\label{fig:Ball}
\end{figure}

The use of multiple meshes dates back to solving the problem of structure mesh generation for finite difference or structured finite volume schemes~\cite{benek1983flexible,henshaw2002overture,starius1977composite,volkov1970method}.
These many-mesh techniques (also known as Chimera or Overset techniques) overcome several limitations of structured grids, such as multiple holes and moving domains, making them  particularly popular for aerodynamic applications~\cite{stangl1996euler}. 
These schemes have also been used in an optimization setting, with similar data-transfer for the adjoint equation~\cite{liao2006aerodynamic}.

A recent method for non-matching meshes for FEM is the
Cut Finite Element Method (CutFEM)~\cite{burman2015cutfem}.
This method uses a Nitsche based formulation to weakly enforce boundary
conditions at the interface. 
CutFEM has been used for a wide range of shape and topology optimization problems, such as acoustics~\cite{bernlandacoustic}, elasticity~\cite{bandara2016shape,burman2018shape} and incompressible flow~\cite{villanueva2017cutfem}. The MultiMesh FEM~\cite{MMFEMarbit} is a generalization of the CutFEM, where the computational domain is described by arbitrary many overlapping non-matching meshes,
coupled with Nitsche's method. The MultiMesh FEM has been explored for the Poisson and Stokes-equations~\cite{hansen2016simulation, johansson2013high, johansson2015high}.
Other methods used for shape optimization of complex computational domains are available, see for example~\cite{NAJAFI20151, noel2017shape, van2005generalized} and the references therein.

The MultiMesh FEM introduces several interesting
aspects when applied to a shape optimization problem.
When creating a solution algorithm for the optimization problem, a choice has to
be made, namely, should one use the \emph{first optimize, then discretize approach} or
the first discretize, then optimize approach. An initial analysis of these approaches has been investigated with respect to shape optimization problems~\cite{berggren}. However, there is no general recipe for
which method is to be preferred~\cite{hinze2008optimization}.
Due to generality and independence of the FEM framework, we have chosen the optimize then discretize approach. However, the authors plan to investigate the effects of a discretize then optimize strategy in a later publication.
Numerical examples show that the numerical inconsistency in the shape gradient is insignificant for fine meshes.

The remainder of this paper is organized as follows. \cref{sec:math} introduces the mathematical notation and presents the new algorithm for solving shape optimization on multiple domains.
\cref{sec:MMFEM} gives a brief introduction solving PDEs on multiple meshes with the MultiMesh FEM.
In \cref{sec:shape}, we then derive shape derivatives using shape calculus, and the associated adjoint equations.
\cref{sec:updatemesh} discusses the optimization step and the mesh updating strategies.
Thereafter, we present several numerical examples in \cref{sec:numerical_examples} and compare the new approach to a traditional FEM when feasible. Finally, we summarize and draw conclusions in \cref{sec:conc}.




\section{Algorithm for solving shape optimization on multiple overlapping domains}\label{sec:math}
In this section, we present the algorithm for solving PDE constrained shape optimization problems using the MultiMesh Finite Element Method (FEM).

In this paper, we consider shape optimization problems of the form
\begin{align}
  &\min_{u,\Omega} J(u,\Omega)\label{eq:GeneralFunctional}\\
  &\text{ subject to}\nonumber\\
  &E(u, \Omega)=0 \label{eq:GeneralState}
\end{align}

where $J(u, \Omega)\in \mathbb R$ is an objective functional, $E(u, \Omega)=0$ is a PDE with solution $u$ defined over the polynomial domain $\Omega\subset\Rb^n, n=1,2,3$.

A related and common case is where the domain $\Omega$ is parameterized and the goal is to optimize the design parameters. The following example illustrates this.

\example{
Consider the problem of minimizing the squared $L_2$-norm of the solution of the Poisson equation.
The domain contains an obstacle, which may be rotated freely around a point $p$. 
The objective is to determine the optimal rotation angle $\theta$ of the obstacle. 
The optimization problem takes the form
\begin{align}
  \min_{T,\theta} J(T,\theta)=\min_{T,\theta} \Int{\Omega(\theta)}{}
  T^2\md x\label{eq:SimpleFunctional},
\end{align}
subject to
\begin{align}
  \begin{split}\label{eq:SimpleState}
    -\Delta T &= f \quad\text{ in } \Omega,\\
    T &= g \quad \text{ on } \dOmega
    ,\\
    g &= \begin{cases}
      1 \quad\text{ on } \Gamma\subset\partial\Omega,\\
      0 \quad\text{ on } \dOmega\setminus\Gamma.
     \end{cases}
  \end{split}
\end{align}
Here, the domain $\Omega$ consists of a rectangle with an elliptic obstacle parameterized by $\theta$. Further, the boundary of $\Omega$ including the boundary of the obstacle is denoted by $\partial \Omega$. The boundary of the obstacle is denoted by $\Gamma\subset\partial\Omega$ and $f(x, y)=x \sin(x) \cos(y)$ is the source function.
The setup is visualized in \cref{fig:SinglePoisson}.
}

\begin{figure}[!ht]
  \centering
  \includegraphics[width=0.5\linewidth]{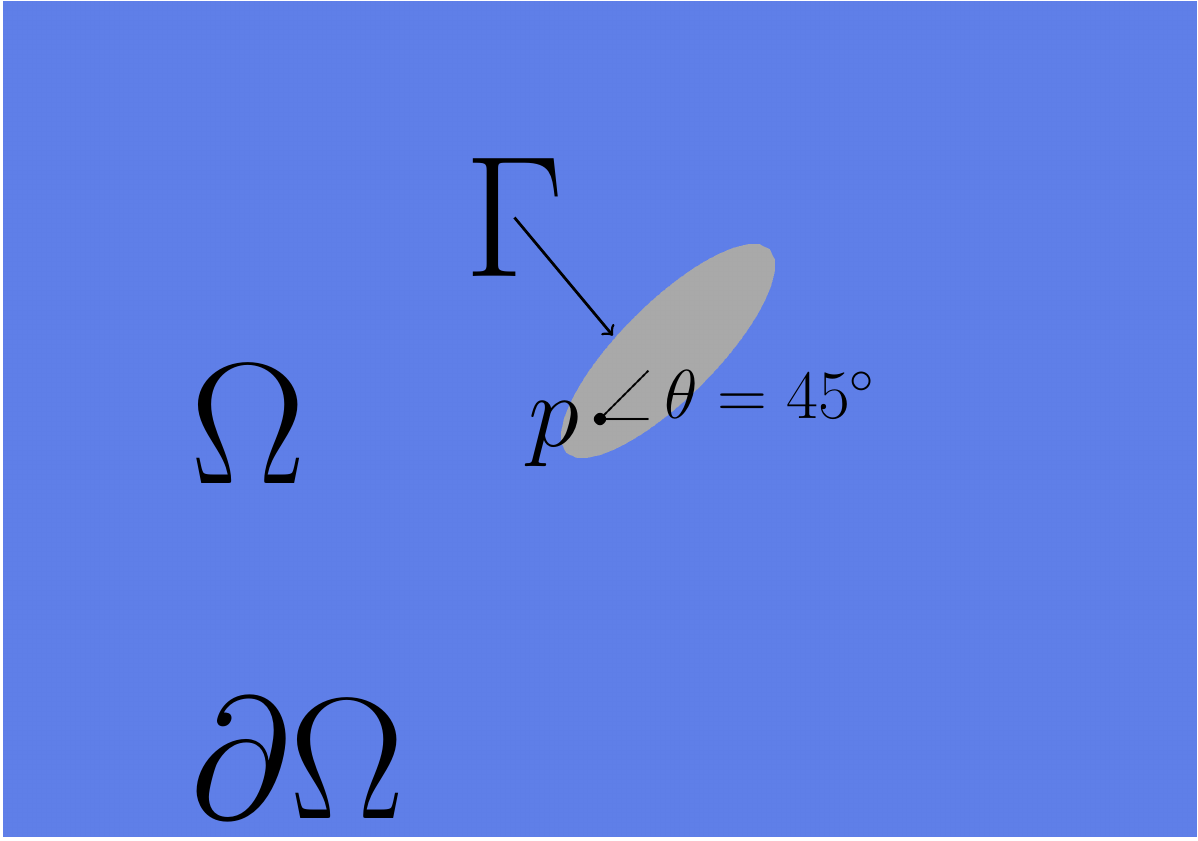}
  \caption{The setup of the example \eqref{eq:SimpleFunctional} and \eqref{eq:SimpleState}. The obstacle, marked in gray, is rotated $45^\circ$ around the point $p$.
  } \label{fig:SinglePoisson}
\end{figure}

 
After differentiation, we discretize the shape optimization problem (\ref{eq:GeneralFunctional}--\ref{eq:GeneralState}) on multiple domains.
For that, the domain $\Omega$ is divided into multiple, possibly
overlapping subdomains $\hat{\Omega}_i, i=0,\dots,N$. 
Further, we restrict the PDE operator $E$ and the state variable $u$ onto these subdomains. 
We denote these restrictions as  $E_{\hat\Omega_i}$ and $u_i$.
The optimization problem (\ref{eq:GeneralFunctional}-\ref{eq:GeneralState}) can now be reformulated to:
\begin{align}\label{eq:overlapping_general_shape_functional}
  \min_{u, \hat \Omega_0, \hat \Omega_1, \dots, \hat \Omega_N} J(u, \hat\Omega_0,\dots,\hat\Omega_N) 
\end{align}
subject to
\begin{align}\label{eq:overlapping_general_state}
  \begin{split}
    E_{\hat\Omega_i}(u_i,\hat\Omega_i)&=0, \quad i=0, \dots, N,\\
    E_{\Lambda_j}(u_{1},\dots, u_j,\hat\Omega_{1},\dots, \hat\Omega_j)
    &= 0 , \quad j = 1, \dots, N,
  \end{split}
\end{align}
where $E_{\Lambda_j}$ are interface conditions that arise from the MultiMesh FEM (see \cref{sec:MMFEM}), which ensures equivalency of \cref{eq:GeneralState} and \cref{eq:overlapping_general_state}.

\examplecont{
  A domain composition of the example problem is visualized in \cref{fig:MultiPreDomain}.}
\begin{figure}[!ht]
  \centering
  \includegraphics[width=0.8\linewidth]{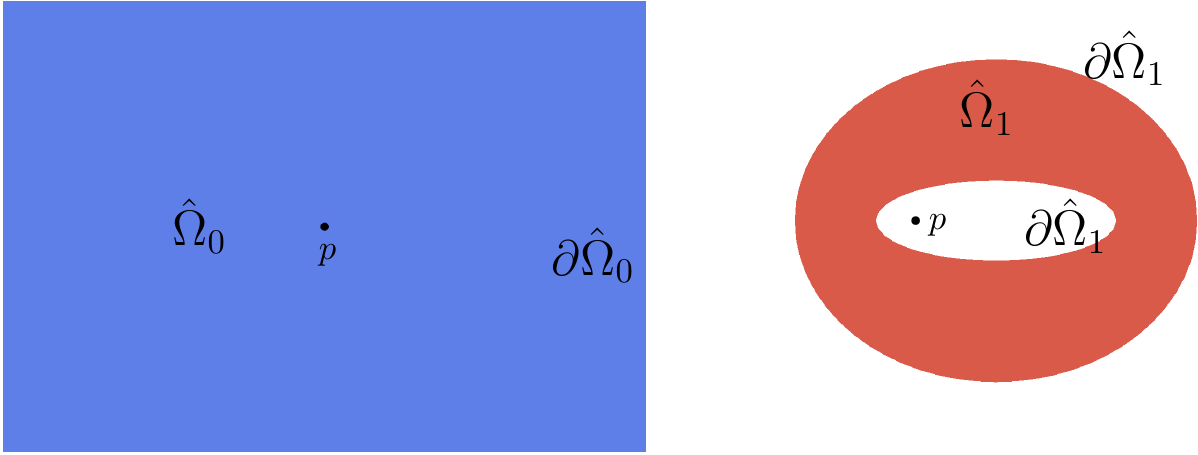}
  \caption{ Subdomains of the multiple domain formulation for example
    (\ref{eq:SimpleFunctional}--\ref{eq:SimpleState}).
    The domain $\hat\Omega_1$ and the obstacle, marked in gray, can be rotated
    around $p$.} \label{fig:MultiPreDomain}
\end{figure}

Formulation (\ref{eq:overlapping_general_shape_functional}--\ref{eq:overlapping_general_state}) is very convenient for discretizing the shape derivative of  (\ref{eq:SimpleFunctional}--\ref{eq:SimpleState}), since each domain can be updated separately.
\FloatBarrier
The formulation (\ref{eq:overlapping_general_shape_functional}--\ref{eq:overlapping_general_state}) results in the following solution algorithm:
\begin{algorithm}[ht]
  \begin{algorithmic}
  \STATE{Set iteration counter $k=0$}
  \STATE{Choose subdomains $\hat \Omega^k_i, i=0,\dots,N$}
  \WHILE{not converged}
  \STATE{Solve the state equations \eqref{eq:overlapping_general_state} on $\cup_{i=0}^N\hat\Omega^k_i$ with
    MultiMesh FEM (\cref{sec:MMFEM})}
  \STATE{Compute the shape sensitivities of functional \eqref{eq:overlapping_general_shape_functional} (\cref{sec:shape})}
  \STATE{Perform optimization step to obtain subdomains
    $\hat \Omega^{k+1}_i, i=0, \dots, N$ (\cref{sec:updatemesh})}
  \STATE{Set $k\gets k+1$}
  \ENDWHILE
  \RETURN{Optimized domain $\bigcup_{i=0}^N\hat \Omega^k_i$}
  \caption{Algorithm for shape optimization with multiple domains.
    \label{alg:shape}}
  \end{algorithmic}
\end{algorithm}

\section{The MultiMesh Finite Element Method}\label{sec:MMFEM}


In this section, we will discuss how to solve the state equation \eqref{eq:GeneralState} with a domain consisting of multiple overlapping sub-domains using the MultiMesh FEM, for which we recall some notation from~\cite{MMFEMarbit}.

\subsection{Domains and Meshes}\label{sc:mm_domains}
In \cref{sec:math}, we introduced a composition of $\Omega$, such that
$\Omega\subseteq\bigcup_{i=0}^N\hat{\Omega}_i$, where $\hat\Omega_i$ is defined
as the \textit{predomain}.
If a point  $x\in\Omega$ can be found in multiple predomains, we associate it with the highest index $i$. Thus, if interpreted visually, the predomain with the higher index appears to be \emph{on top} of the predomain with the lower index.
Since the predomains will overlap, we define the \textit{visible part} of each predomain as $\Omega_i:=\hat\Omega_i\setminus\bigcup_{j=i+1}^N\hat\Omega_j$, $i=0,\dots,N$. Note that $\hat\Omega_N = \Omega_N$. The \textit{visible boundary} of each predomain $\hat\Omega_i$ is denoted $\Lambda_i:=\partial\hat\Omega_i\setminus\bigcup_{j=i+1}^N\hat\Omega_j$, $i=1,\dots,N$.

We define a \textit{premesh} $\hat{\mathcal{K}}_{h,i}$ as the mesh of the
predomain $\hat\Omega_i$, and denote its maximum cell diameter $h_i$.
The elements of $\hat{\mathcal{K}}_{h,i}$ can be divided into the following
three distinct categories: \textit{uncut}, \textit{cut} and
\textit{covered} elements. Uncut elements are the fully visible elements, cut elements are the partially visible elements, and covered elements are the hidden elements. By manually changing the status of elements to covered, topological changes such as holes may be modeled. This is illustrated in the next example.

The $i$-th \textit{active mesh} $\mathcal{K}_{h,i}$ consists of all cut and uncut elements of $\hat{\mathcal{K}}_{h,i}$. We define the cut domain $\Omega_{i}^{cut}$ as the union of all cut elements. Note that $\Omega_N^{cut} = \emptyset$.
The $i$-th \textit{overlap} is defined as $\mathcal{O}_i=\Omega_{i}^{cut}\setminus\Omega_i$, $i=0,\dots,N-1$. This is the hidden part of the active mesh.

\examplecont{
  The predomains $\hat\Omega_0$ and $\hat\Omega_1$ are shown in \cref{fig:MultiPreDomain}. In \cref{fig:MultiActiveDomain} the visible part of each domain, that is, $\Omega_0$ and $\Omega_1$, is illustrated. In \cref{fig:CellTypes}a), the premeshes $\hat{\mathcal{K}}_{h,0}$ and $\hat{\mathcal{K}}_{h,1}$ are illustrated in black and red. \cref{fig:CellTypes}a) also shows the cut, uncut and covered elements. Note that all element in $\mathcal{\hat K}_{h,1}=\mathcal{K}_{h,1}$ are uncut. In \cref{fig:CellTypes}b), a hole has been introduced in the domain by setting all elements in $\mathcal{\hat K}_{h,0}$ that are cut or covered by the obstacle on $\hat\Omega_0$ to being covered. This creates the effect of a hole in $\hat\Omega_0$, since the covered elements will be ignored in the weak formulation. The boundary of the obstacle now becomes a physical boundary, $\Gamma:=\partial\Omega_1\setminus\Lambda_1$. This will be discussed in the next section. The strong form of the state
  equations, \cref{eq:SimpleState} can be written as

\begin{align}\begin{split}\label{eq:SimpleMMStrong}
    E_{\Omega_0}(T_0,\Omega_0)&= -\Delta T_0 - f = 0\quad\text{ in } \Omega_0,\qquad
    T_0 = g, \quad\text{ on } \partial\Omega_0,\\
    E_{\Omega_1}(T_1,\Omega_1)&= -\Delta T_1 - f = 0 \quad\text{ in } \Omega_1,\qquad T_1=g,\quad\text{ on }\Gamma,
  \end{split}\\
  \begin{split}\label{eq:SimpleMMStrongInterface}
    E_{\Lambda_1}(T_0,T_1,\Omega_0,\Omega_1)&=
    \begin{cases}
      \llbracket T\rrbracket = 0 \hspace{9.2ex}\text{ on } \Lambda_1,\\
      n_1\cdot\left\llbracket\nabla T\right\rrbracket =0
      \hspace{3.2ex}\text{ on } \Lambda_1,
    \end{cases}
  \end{split}
\end{align}
where $\cdot$ is the vector dot product, $\llbracket\psi\rrbracket=\psi_1-\psi_0$ denotes the jump.
The normal vector $n_1$  is defined to be pointing outwards of
the domain $\hat\Omega_1$. The two interface conditions on $\Lambda_1$ ensure
sufficient smoothness of the solution $T$ across the $\Lambda_1$.
  }

\begin{figure}[!ht]
  \centering
  \includegraphics[width=0.5\linewidth]{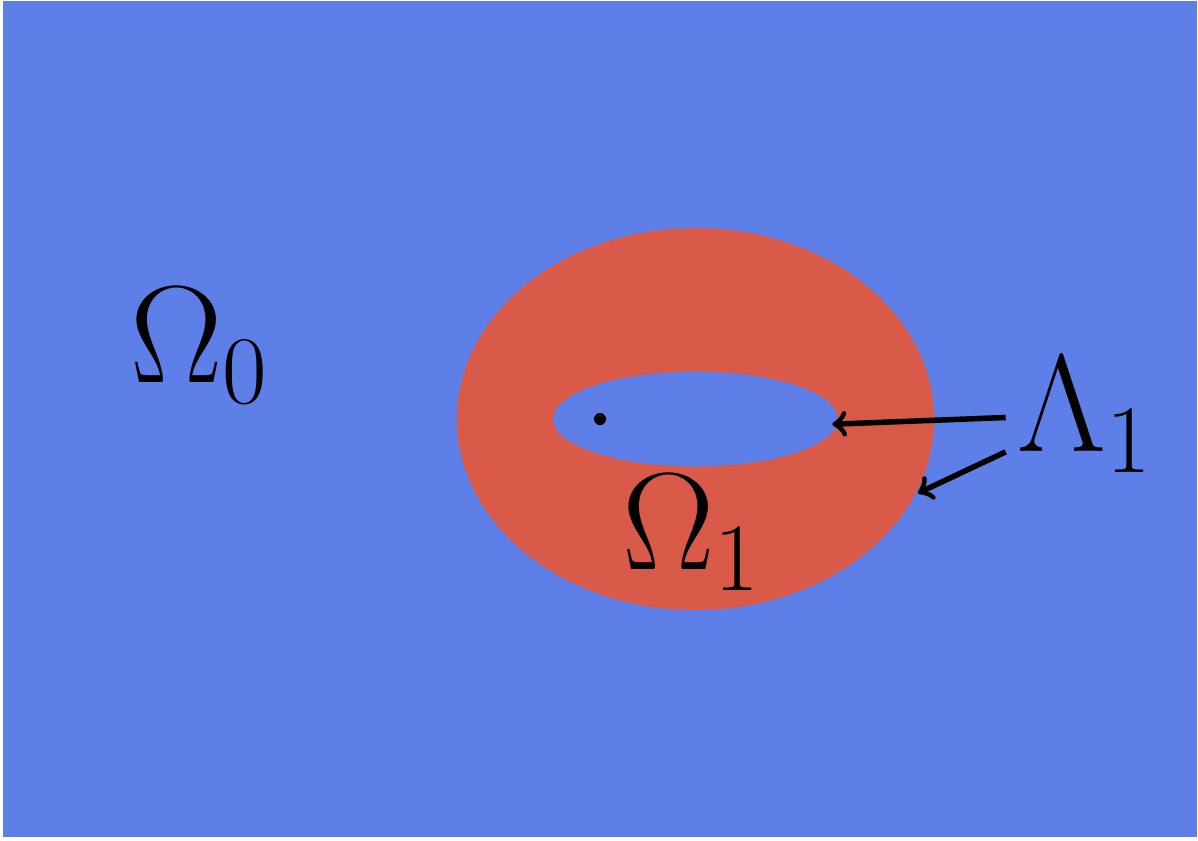}
  \caption{The visual part, $\Omega_0, \Omega_1$ of each predomain $\hat\Omega_0,\hat\Omega_1$ (Shown in \cref{fig:MultiPreDomain}). 
  } \label{fig:MultiActiveDomain}
\end{figure}
\begin{figure}
  \centering
  \begin{tikzpicture}
    \node (sm) [above right, inner sep=0pt]{
      \includegraphics[width=\linewidth]{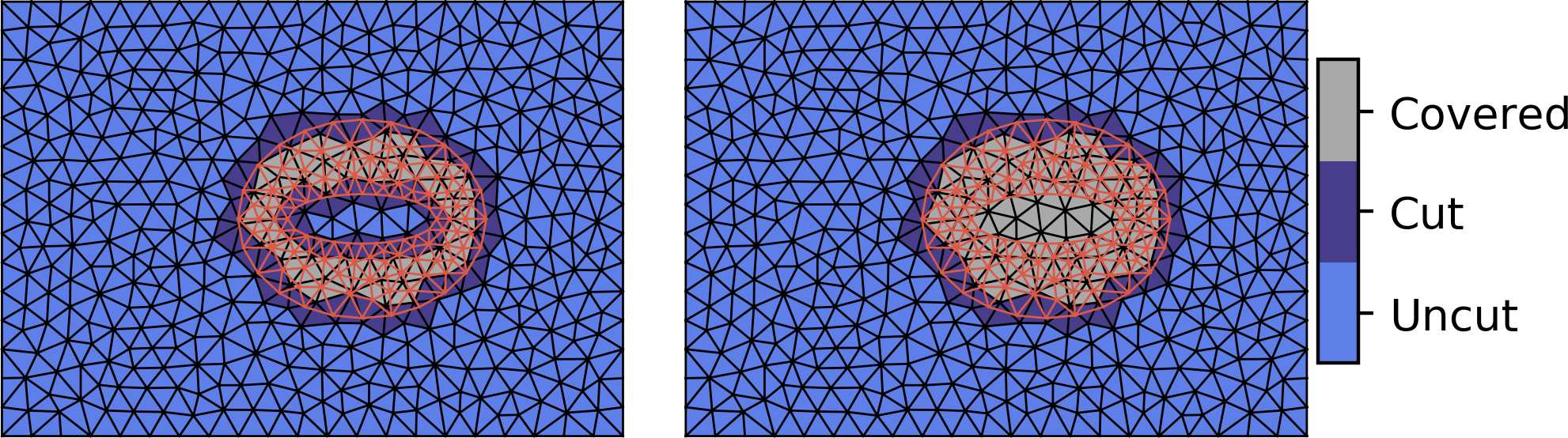}};     
      \node (A) at (0.25\linewidth,-0.25) {(a)};
      \node (B) at (0.66\linewidth,-0.25) {(b)};
  \end{tikzpicture}
  \caption{(a) Visualization of the premeshes $\hat{\mathcal{K}}_0$ (black) and $\hat{\mathcal{K}}_1$ (red). The uncut, cut and covered elements of $\hat{\mathcal{K}}_0$ is shown. (b) The element types after introducing a hole in the domain.}
    \label{fig:CellTypes}
\end{figure}
\subsection{Function Spaces and the Finite Element Method}
 For the weak formulation of \cref{eq:SimpleMMStrong,eq:SimpleMMStrongInterface} the interface conditions are enforced weakly using a Nitsche method~\cite{nitsche1971}. The method contains interior penalty terms, very similar to a discontinuous Galerkin method~\cite{arnold2002unified}, as well as additional stabilization to obtain a stable method. The proposed method is symmetric, stable and yields optimal convergence rates and optimal condition numbers, also in the case of small overlaps~\cite{MMFEMarbit}. Since the interface is not aligned with the meshes, custom quadrature rules are needed to perform the volume and interface integrals that appear in the formulation. See~\cite{Johansson:2017ab} for details.
Let $V_{h, i}$, $i=0,\dots, N$, denote the finite element space on
the active mesh $\mathcal{K}_{h,i}$ consisting of continuous piece-wise
Lagrange polynomials. We define $V_h := \bigoplus_{i=0}^N V_{h, i}$.

\examplecont{
  Let $V_h$ be as described above using $N=1$, and let $V_h^g$ denote the corresponding function space that satisfy the boundary condition. The MultiMesh finite element formulation of the aforementioned example is the following: Find $T = (T_0, T_1) \in V_h^g$ such that
\begin{align}
  \label{eq:SimplePoissonWeak}
    a(T,v)
    +a_{IP}(T,v)+a_O(T,v) = l(v) \quad \forall v = (v_{0}, v_{1}) \in V_h^0,
\end{align}
with
\begin{align}
  a(T,v)&:=\sum_{i=0}^1\Int{\Omega_i}{}\nabla T_i \cdot \nabla v_{i}\md x,\quad
  l(v) := \sum_{i=0}^1\Int{\Omega_i}{}f v_{i}\md x.
\end{align}
The symmetric interior penalty terms are
\begin{align}
  a_{IP}(T,v) &:= \Int{\Lambda_1}{}
  -\langle n_1\cdot \nabla T\rangle \llbracket v\rrbracket
-\llbracket T \rrbracket\langle n_1\cdot\nabla v\rangle +
\frac{\beta_0}{h}\llbracket T\rrbracket\llbracket v\rrbracket\md S,
\end{align}
where $\langle n_1\cdot \nabla \psi\rangle= \frac{1}{2}n_1\cdot(\nabla \psi_1+\nabla \psi_0)$ is the average, $\beta_0$ is a sufficiently large penalty parameter and $h=\frac{h_0+h_1}{2}$.
The overlap stability term is
\begin{align}
    a_O(T,v)&:= \Int{\mathcal{O}_1}{}\beta_1 \llbracket \nabla T \rrbracket \cdot \llbracket\nabla v\rrbracket \md x,
\end{align}
}
  where $\beta_1$ is added for controlling the conditioning of the arising linear system. If not otherwise stated, $\beta_0=\beta_1=4$ is used.


\section{Shape Calculus}\label{sec:shape}

The goal of the section is to derive the
(shape) derivative of the objective functional \eqref{eq:overlapping_general_shape_functional} with respect to the domain $\Omega$.
In principal, the shape derivatives can be derived before or after the MultiMesh FEM discretization.
We perform the derivation before the discretization (i.e. from \cref{eq:GeneralFunctional,eq:GeneralState}), which has the benefits as discussed before, namely independence of the software and multi-mesh formulation to be used. The downside is that we introduce a discrete inconsistency in the shape gradient. This inconsistency is only affecting performance when employing coarse meshes. On finer meshes, the inconsistency does not affect performance, as shown later in this section. The chosen approach is visualized in \cref{fig:OptDisc}.
\begin{figure}[!ht]
  \centering
  \usetikzlibrary{arrows,decorations.markings}
  \begin{tikzpicture}[
     decoration={
       markings,
       mark=at position 1 with {\arrow[scale=4,black]{latex}};}]
    \tikzstyle{box}=[draw, fill=cyan!30]
    \coordinate (Opt) at (-\linewidth,0);
    \coordinate (Adjoint) at (-0.4\linewidth,0);
    \coordinate (MMOpt) at (-\linewidth,-3);
    \coordinate (MMAdjoint) at (-0.4\linewidth,-3);
    \node [box,text width=0.3\linewidth] (Sopt) at (Opt)  {Shape optimization problem (\ref{eq:GeneralFunctional}--\ref{eq:GeneralState})};
    \node [box, text width=0.3\linewidth] (Sadj) at (Adjoint) {Shape sensitivity and adjoint equation};
    \node [box, text width=0.3\linewidth] (Madj) at (MMAdjoint) {Weak form of adjoint equation on multiple meshes};
    \node [box, text width=0.3\linewidth] (Mopt) at (MMOpt) {Weak form of state equation on multiple meshes};
    \draw[decoration={markings,mark=at position 1 with
        {\arrow[scale=2,>=stealth]{>}}},postaction={decorate}] (Sopt.east) --  node[below]{\begin{tabular}{c}Shape Calculus\\ Adjoint approach\end{tabular}} (Sadj.west); 
    \draw[decoration={markings,mark=at position 1 with
        {\arrow[scale=2,>=stealth]{>}}},postaction={decorate}] (Sopt.south) --  node[left]{MultiMesh FEM} (Mopt.north);
    \draw[decoration={markings,mark=at position 1 with
        {\arrow[scale=2,>=stealth]{>}}},postaction={decorate}] (Sadj.south) --  node[left]{MultiMesh FEM} (Madj.north);
  \end{tikzpicture}
  \caption{Schematic illustration of the first optimize, then discretize
    approach used in this paper. Shape calculus is employed on the strong formulation of the
    traditional one domain problem. Thereafter, the state equation, and the corresponding
    adjoint equation are discretized using the MultiMesh FEM.
  }\label{fig:OptDisc}
\end{figure}
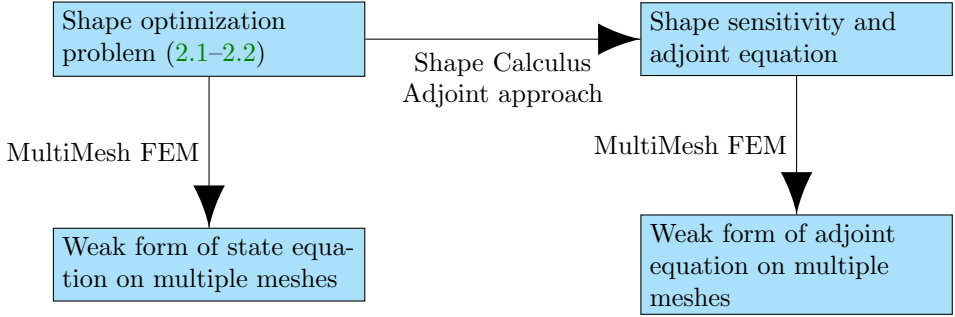

We assume that the state \cref{eq:GeneralState} yields a unique solution $u$ for any given domain $\Omega$.
Then we define the reduced functional as $\hat J(\Omega)=J(u(\Omega),\Omega)$.
We define a perturbed domain $\Omega(\epsilon)$ as
\begin{align}
  \Omega(\epsilon)[s]:=L_\epsilon[s](\Omega)=\left\{ L_\epsilon[s](x):x\in\Omega\right\}\label{eq:omegas}, 
\end{align}
where  $L_\epsilon[s](x):=x(\epsilon):=x+\epsilon s(x)$, $s(x):\Rb^n\to\Rb^n, \epsilon>0$. The shape derivative is then defined as
\begin{align}
  \md \hat J(\Omega)[s]:=\lim_{\epsilon\to 0^+}\frac{\hat J(\Omega(\epsilon))-\hat J(\Omega)}{\epsilon}.
\end{align}
The solution of the PDE on the perturbed domain $\Omega(\epsilon)$ is denoted $u_\epsilon$.
The material derivative and local derivative of $u_\epsilon$ are defined as
\begin{align}
  \md u[s]&:= \lim_{\epsilon\to0^+}\frac{u(\epsilon,x(\epsilon))-u(0,x(0))}{\epsilon}
  ,\quad u'[s]:=du[s]-Du\cdot s,
\end{align}
where $Du$ is the Jacobian.
Using Hadamard's formula, we find the total shape derivative of the functional.

\subsection{Hadamard's formulas}
We consider the cases where the functional is a volume integral or surface integral.
The surface formulation of the Hadamard formulas is used, as the volume formulation would include  expressions containing the overlap and cut interface. The authors plan to investigate the effects of these additional terms in a later publication.
\begin{theorem}[Hadamard Formula for  Volume Objective Functions]
  \label{HadamardVol}
  For a general volume objective function $k:\Omega\rightarrow\mathbb{R}$
  \begin{align}J(\Omega)=\Int{\Omega}{}k\md x,\end{align}
  the shape derivative is given by
  \begin{align}
    \md J(\Omega)[s]&= \Int{\Gamma}{}s\cdot n k\md S + \Int{\Omega}{}k'[s]\md x.
  \end{align}
\end{theorem}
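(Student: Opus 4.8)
The plan is to reduce the shape derivative to an ordinary derivative in $\epsilon$ by transporting the integral over the moving domain $\Omega(\epsilon)$ back onto the fixed reference domain $\Omega$ via the change of variables $x(\epsilon)=L_\epsilon[s](x)=x+\epsilon s(x)$. Under this map the volume element transforms by the Jacobian determinant $\det(I+\epsilon\,Ds)$, where $Ds$ is the Jacobian of $s$, so that
\begin{align}
  J(\Omega(\epsilon))=\Int{\Omega(\epsilon)}{}k\md x = \Int{\Omega}{}k(\epsilon,x(\epsilon))\,\det(I+\epsilon\,Ds)\md x.
\end{align}
This rewrites the numerator of the difference quotient defining $\md\hat J(\Omega)[s]$ as an integral over a domain that no longer depends on $\epsilon$, which is precisely what makes differentiation under the integral sign available.

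Next I would differentiate the integrand at $\epsilon=0$. Two ingredients are needed. First, the classical Jacobi formula gives $\totder{}{\epsilon}\det(I+\epsilon\,Ds)\big|_{\epsilon=0}=\operatorname{tr}(Ds)=\nabla\cdot s$, together with $\det(I+\epsilon\,Ds)\big|_{\epsilon=0}=1$. Second, by the definition of the material derivative the $\epsilon$-derivative of $k(\epsilon,x(\epsilon))$ at $\epsilon=0$ is exactly $\md k[s]$. Applying the product rule then yields
\begin{align}
  \md J(\Omega)[s]=\Int{\Omega}{}\big(\md k[s]+k\,\nabla\cdot s\big)\md x.
\end{align}

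The final step is to convert this into the stated form. Using the decomposition $\md k[s]=k'[s]+Dk\cdot s$ from the definition of the local derivative, the integrand becomes $k'[s]+Dk\cdot s + k\,\nabla\cdot s = k'[s] + \nabla\cdot(k\,s)$, where the last equality is the product rule for the divergence. Applying the divergence theorem to $\nabla\cdot(k\,s)$ produces a boundary term $\Int{\dOmega}{}k\,s\cdot n\md S$, and since the perturbation field $s$ is taken to vanish (or have vanishing normal component) on the fixed part of the boundary $\dOmega\setminus\Gamma$, only the contribution over the design boundary $\Gamma$ survives. This gives exactly $\md J(\Omega)[s]=\Int{\Gamma}{}s\cdot n\,k\md S+\Int{\Omega}{}k'[s]\md x$.

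The main obstacle I anticipate is not any single algebraic identity but the analytic justification of interchanging the limit $\epsilon\to 0^+$ with the integral: this requires enough regularity of $s$ (for instance $s\in C^1$ so that $L_\epsilon[s]$ is a diffeomorphism for sufficiently small $\epsilon$) and of $k$ together with its material derivative, so that both the integrand and its $\epsilon$-derivative are uniformly dominated. Once this is in place, the determinant expansion and the divergence-theorem step are routine; the only other point needing care is the assumption on the support of $s$ that collapses $\dOmega$ to the design boundary $\Gamma$.
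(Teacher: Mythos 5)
Your proposal follows exactly the same route as the paper's proof: transport the integral to the reference domain via the pullback with Jacobian determinant $\det DL_\epsilon$, differentiate under the integral sign using Jacobi's formula to obtain $\Int{\Omega}{}k\,\mathrm{div}(s)+\md k[s]\md x$, rewrite with the local derivative as $\mathrm{div}(ks)+k'[s]$, and apply the divergence theorem. It is correct, and your added remarks on the regularity needed to justify differentiating under the integral and on the support of $s$ are reasonable elaborations of details the paper leaves implicit (deferring to~\cite{delfour2011shapes}).
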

\begin{proof}
  To facilitate the derivative with respect to the perturbed domain, the
integral is transported to the unperturbed domain, as shown below.
\begin{align}
  \begin{split}
   &\md J(\Omega)[s]=\left.\totder{}{\epsilon}\right\vert_{\epsilon=0}
    \Int{\Omega(\epsilon)[s]}{}k(\cdot,\epsilon)\md x
    =\Int{\Omega}{}\left.\totder{}{\epsilon}\right\vert_{\epsilon}\Big(
    k(L_\epsilon(\cdot),\epsilon)\vert \mathrm{det}DL_\epsilon(\cdot)\vert\Big)\md  x\\
    &=\Int{\Omega}{}k\mathrm{div}(s)+\md k[s]\md x=
    \Int{\Omega}{}\mathrm{div}(ks)+k'[s]  \md x\\
    &= \Int{\Gamma}{}s\cdot n k\md S
    +\Int{\Omega}{}k'[s]\md x.
  \end{split}
\end{align}
More details can be found in~\cite{delfour2011shapes}.
\end{proof}
\begin{theorem}[Hadamard Formula for Surface Objectives]\label{HadamardSur}
  For a general surface objective function $h:T(\Gamma)\rightarrow\mathbb{R}$, which is dependent
  of the shape and for which $\der{h}{n}$ exists, the shape derivative for the surface objective
  \begin{align}J(\Omega)=\Int{\Gamma}{} h\md S\end{align}
  is given by
  \begin{align}\md J(\Omega)[s]=\Int{\Gamma}{}s\cdot n\left(\der{h}{n}+\kappa h\right)\md S + \Int{\Gamma}{}h'[s]\md S,\end{align}
  where $\kappa=\text{div}_\Gamma n$ is the tangential divergence of the normal,
  i.e. the additive mean curvature of $\Gamma$.
\end{theorem}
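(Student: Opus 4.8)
The plan is to follow the same template as the proof of \cref{HadamardVol}: pull the surface integral back onto the fixed reference surface $\Gamma$ through the perturbation map $L_\epsilon[s]$, differentiate the transported integrand at $\epsilon=0$, and then reorganize the result with tangential calculus so that only the geometric quantities $s\cdot n$, $\der{h}{n}$ and $\kappa$ survive. The first step is to write
\begin{align*}
  J(\Omega(\epsilon))=\Int{\Gamma(\epsilon)[s]}{} h(\cdot,\epsilon)\md S
  =\Int{\Gamma}{} h(L_\epsilon(\cdot),\epsilon)\,\omega_\epsilon\md S,
\end{align*}
where $\omega_\epsilon=\vert\mathrm{det}\,DL_\epsilon\vert\,\vert (DL_\epsilon)^{-T}n\vert$ is the \emph{tangential} surface Jacobian, i.e. the surface analogue of the volume factor $\vert\mathrm{det}\,DL_\epsilon\vert$ appearing in \cref{HadamardVol}.

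Differentiating under the integral and using $\omega_0=1$, the shape derivative splits into a contribution from the integrand and a contribution from the deforming measure,
\begin{align*}
  \md J(\Omega)[s]=\Int{\Gamma}{}\Big(\md h[s]+h\,\left.\totder{\omega_\epsilon}{\epsilon}\right\vert_{\epsilon=0}\Big)\md S.
\end{align*}
Two ingredients are then needed. The first is the decomposition of the material derivative into its local part and a transport part, $\md h[s]=h'[s]+\nabla h\cdot s$, exactly as introduced in \cref{sec:shape}. The second is the identity
\begin{align*}
  \left.\totder{\omega_\epsilon}{\epsilon}\right\vert_{\epsilon=0}=\mathrm{div}\,s-n\cdot (Ds)\,n=\mathrm{div}_\Gamma s,
\end{align*}
that is, the $\epsilon$-derivative of the surface Jacobian equals the tangential divergence of $s$; this is the surface counterpart of the computation $\left.\totder{}{\epsilon}\right\vert_{\epsilon=0}\vert\mathrm{det}\,DL_\epsilon\vert=\mathrm{div}\,s$ used for volumes.

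The final and most delicate step is to rewrite $\Int{\Gamma}{}h\,\mathrm{div}_\Gamma s\md S$. Here I would invoke the tangential Green's formula on the closed surface $\Gamma$,
\begin{align*}
  \Int{\Gamma}{}h\,\mathrm{div}_\Gamma s\md S
  =\Int{\Gamma}{}\kappa\,h\,(s\cdot n)\md S-\Int{\Gamma}{}\nabla_\Gamma h\cdot s\md S,
\end{align*}
in which the curvature $\kappa=\mathrm{div}_\Gamma n$ is produced precisely by the normal component of $s$ and, since $\Gamma$ has no boundary, no co-normal boundary term appears. Substituting this together with the material-derivative decomposition, the two tangential-gradient terms combine through $\nabla h\cdot s-\nabla_\Gamma h\cdot s=(\der{h}{n}\,n)\cdot s=\der{h}{n}\,(s\cdot n)$, which is exactly where the hypothesis that $\der{h}{n}$ exists enters. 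Collecting the $s\cdot n$ factors then yields the claimed formula. I expect the main obstacle to be establishing the two surface identities rigorously — the $\epsilon$-derivative of the tangential Jacobian and the tangential Green's formula — rather than the final algebraic recombination, which is routine.
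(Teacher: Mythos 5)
Your proposal is correct and follows exactly the strategy the paper itself indicates: the paper omits this proof, stating only that it "follows the same strategy as \cref{HadamardVol}" and deferring to~\cite{delfour2011shapes}, and your pullback via the tangential surface Jacobian $\omega_\epsilon$, the identity $\left.\totder{\omega_\epsilon}{\epsilon}\right\vert_{\epsilon=0}=\mathrm{div}_\Gamma s$, and the tangential Green's formula on the closed surface are precisely the standard Delfour--Zol\'esio argument being referenced. The recombination $\nabla h\cdot s-\nabla_\Gamma h\cdot s=\der{h}{n}\,(s\cdot n)$ is also handled correctly, so your write-up in fact supplies the details the paper leaves to the literature.
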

This proof is following the same strategy as \cref{HadamardVol}. Therefore the proof is omitted, and given in~\cite{delfour2011shapes}.

In our case, $h'[s]$ and $k'[s]$ are the local derivatives of the state
variable $u$ with respect to the design parameters.  When discretized, this is
a dense matrix which is prohibited to compute. Instead, we use the adjoint approach to avoid explicit computations of these terms.

\subsection{The Adjoint Approach}\label{sec:adjoint}
We use the Lagrangian approach to obtain the
shape sensitivity and the adjoint equation. 
We start by defining the Lagrangian $\mathcal{L}$ of problem (\ref{eq:GeneralFunctional}--\ref{eq:GeneralState}), assuming that $J$ is a volume integral
\begin{align}
  \mathcal{L}(u,\Omega,\lambda)&=\Int{\Omega}{}j\md x+(\lambda, E(u,\Omega))_\Omega
  ,\label{eq:SimpleLagrangian}
\end{align}
where $\lambda$ is the Lagrange multiplier. Technically, the scalar product used to define the Lagrangian should not depend on the design parameter $\Omega$.
However, as shown above in the proof of \cref{HadamardVol} the shape gradient is expressed on a reference domain.


The directional derivative of the Lagrangian is
\begin{align}
  \md \mathcal{L}[s]&=\Int{\Gamma}{}s\cdot n(j+\lambda E)\md S
  +\Int{\Omega}{} j'[s]+\lambda E'[s]\md x+\Int{\Omega}{}\lambda'[s]E\md x.
\end{align}

The necessary optimality condition states that the directional derivative of the Lagrangian vanishes for all $s$. This yields the following conditions in variational form.
\begin{align}
  \Int{\Gamma}{}s\cdot n(j+\lambda E)\md S&=0 \quad \forall s,\quad \text{(Design Eq.)}\label{eq:Design}\\
  \Int{\Omega}{} \der{j}{u}u'+\lambda \der{E}{u}u'\md x&=0\quad \forall u'\quad \text{(Adjoint Eq.)}\label{eq:Adjoint},\\
  \Int{\Omega}{}\lambda'E\md x&=0\quad \forall \lambda'\quad \text{(State Eq.)}\label{eq:State}.
\end{align}

\examplecont{
For the example problem (\ref{eq:SimpleFunctional}--\ref{eq:SimpleState}), the Lagrangian is
\begin{align}
  \mathcal{L}(T,\Omega,(\lambda^{\Omega},\lambda^{\dOmega}))&= \Int{\Omega}{}T^2\md x + \Int{\Omega}{} \lambda^\Omega(-\Delta T-f)\md x + \Int{\dOmega
  }{}\lambda^{\dOmega} (T-g)\md S\label{eq:ExampleLagrangian}
\end{align}
The adjoint equation\cref{eq:Adjoint} is
\begin{align}
  \Int{\Omega}{}2TT'\md x + \Int{\Omega}{} -\lambda^\Omega\Delta T'\md x + \Int{\dOmega
  }{}\lambda^{\dOmega}T'\md S=0 \quad \forall T'.
\end{align}
Integrating by parts twice, assuming sufficient regularity of $\lambda$, and interpreting the result as the strong formulation yields
\begin{align}
  \begin{split}\label{eq:StrongAdjoint}
    -\Delta\lambda^\Omega&=-2T \quad\text{ in } \Omega,\\
    \lambda^\Omega&=0\quad\text{ on } \partial \Omega,\\
    \lambda^{\dOmega}&=-\der{\lambda^\Omega}{n}\quad\text{ on } \dOmega.
  \end{split}
\end{align}
We observe that the adjoint equation \eqref{eq:StrongAdjoint} is a Poisson
problem with a homogeneous Dirichlet condition on $\dOmega$.
Using the notation of \cref{eq:SimplePoissonWeak}, we obtain the weak formulation on two meshes of the adjoint equation as:
Find $\lambda=(\lambda_0,\lambda_1) \in( V_{0}^0, V_{1}^0)=V$ such that:
\begin{align}
  \begin{split}
  &a(\lambda,v)+a_{IP}(\lambda,v)+a_O(\lambda,v)=-2\sum_{i=0}^1\Int{\Omega_i}{}T_i v_i\md x, \quad \forall v=(v_0,v_1)\in V.
  \end{split}
\end{align}
The design equation~\cref{eq:Design} is
\begin{align}
  \Int{\Gamma}{}s\cdot n\Big( T^2+\lambda^\Omega(-\Delta T-f)+\der{(\lambda^{\dOmega}(T-g))}{n}+\kappa\lambda^{\dOmega}(T-g)\Big)
  \md S=0 \quad\forall s.\label{eq:ExDesign}
\end{align}
Assuming that state~\cref{eq:SimpleState} and adjoint~\cref{eq:StrongAdjoint} equations are satisfied, the left hand side of the design equation corresponds to the gradient of the reduced functional, i.e.
\begin{align}
\md \hat J(\Omega)[s]&=
   \Int{\Gamma}{}s\cdot n\left(T^2-\der{\lambda^\Omega}{n}\der{(T-g)}{n}\right)\md S.
\end{align}
}

\section{Optimization Algorithm and Mesh Deformation}\label{sec:updatemesh}

We use the steepest descent method as the optimization algorithm.
Combined with a line search, the steepest descent method guarantees a decrease in the goal functional by iteratively moving in the opposite direction as the gradient of the functional. More precisely, for a functional $\hat J$, with
design-parameter $\Omega^k$ at iterate $k$, the $k$-th iteration of the steepest
descent method is:
\begin{align}\label{eq:steepest_descent}
  \Omega^{k+1}&=\Omega^k(\xi)[d] 
\end{align}
where $\xi>0$ is the step-length decided by e.g. an Armijo linesearch~\cite{armijo1966minimization}, and  $d: \Omega^k \to \mathbb R^n$ is the Riesz representer of $-\mathrm{d}\hat J$.
It is convenient to incorporate the mesh deformation scheme at this point, as it ensures continuity into the interior and smoothness of the boundary.
This can be achieved for instance by choosing a Riesz representer
in a scaled $H^1$-norm. This results in the following variational problem: find $d$ such that
\begin{align}
\Int{\Omega^k}{}{}\alpha\nabla d\cdot \nabla s+ d\cdot s \md x = -\mathrm{d}\hat J[s] \quad\forall s,\label{eq:OmUp}
\end{align}
where $\alpha\ge0$ can be thought of as a smoothing parameter.

By choosing appropriate test and trial function spaces for \cref{eq:OmUp}, we can control which type of shape deformations are allowed.
For instance, if the design variable is the position of an obstacle, it is natural to create a submesh that contains the obstacle, and to choose the test and trial functions to constant unit vectors restricted to that submesh. The consequence is that the Riesz-representer $d$ is a spatially constant function on the submesh, and the submesh is translated in its entirety, see \cref{fig:Ball}b). 
This is in contrast to a traditional one mesh approach, where the test and
trial function spaces are linear functions per element.
This results in the compression effect, see \cref{fig:Ball}a), and can lead to invalid meshes for larger deformations.
Similarly, if the design variable is the rotation of an obstacle contained in a submesh $\hat \Omega^k_i \subset \mathbb R^2$,
the test and trial space is spanned by a single function describing the rotation velocity of the submesh:
\begin{align}
s(p) =
\begin{cases}
     (-p_y+c_y, p_x-c_x) ,& \text{if } p \in \hat \Omega^k_i, \\
     (0, 0) ,& \text{else},
\end{cases}
\end{align}
where $c = (c_x, c_y)$ is the center of rotation.

In the general case, where the design variables are the node positions on a boundary $\Gamma$ on a submesh $\hat \Omega^k_i$, a natural test and trial space for \cref{eq:OmUp} is the finite element space spanned by continuous, piece-wise linear functions on $\hat \Omega^k_i$. This approach works well for small deformations, but yields degenerated meshes for larger deformations.
A more robust, but computationally more expensive approach is to additionally advect the negative gradient from $\Gamma$ to the other boundaries
\begin{align}
  \begin{split}
\begin{alignedat}{2}
  \Int{\Omega}{}{} \alpha_0\nabla d \cdot \nabla s + d\nabla\epsilon \cdot \nabla s \md x &= 0 & \forall s  \text{ on } \hat \Omega^k_i,\\
  d &= n g(x)&  \quad \text{ on } \Gamma,
  \label{eq:ConvEik}
\end{alignedat}
\end{split}
\end{align}
 where the right hand side stems from the integrand of the shape derivative, i.e. $\md \hat J[t] =\Int{\Gamma}{}t\cdot n g(x)\md\Gamma$, and $\epsilon$ is the solution of a smoothed Eikonal equation:
\begin{align}\begin{split}\label{eq:Eikonal}
  -\alpha_1\Delta\epsilon+\norm{\nabla\epsilon}_2^2&=1 \text{ in } \hat \Omega^k_i,\\
  \epsilon &= 0 \quad\text { on } \Gamma,
\end{split}
\end{align}
with smoothing parameter $\alpha_0,\alpha_1 \ge 0$. The results in this paper have been produced with $\alpha_0=10^{-3}, \alpha_1=25$ unless otherwise stated. Note that the outer boundaries of the submesh $\partial \hat\Omega_i \setminus
\Gamma$ are free to move.
Further note that the deformation equations only have to be solved on the submesh, while the background domain is kept stationary.
 Therefore, the degrees of freedom in the deformation equation are significantly reduced compared to the traditional one mesh approach. 

The other alternative for updating the computational domain is to move the boundary of the original mesh, and use a re-mesh algorithm based on the new boundary.
This approach is not employed in this article, but the authors note that by employing multiple meshes, smaller meshes can be re-meshed to make large changes in the geometry.

\examplecont{
We illustrate the advantage of the multiple domain approach with the example
problem \cref{eq:SimpleFunctional,eq:SimpleState}. The optimal rotation of the obstacle is shown in \cref{fig:SimpleOpt} b), where the optimization algorithm converged after 5 iterations. Since the top-mesh $\mathcal{K}_{h,1}$ has been rotated as an
entity, the mesh-quality is fully preserved.
In \cref{fig:SimpleOpt} a), we verify the solution
by plotting the functional for all different orientations of the obstacle.
}
\begin{figure}[ht]
  \centering
      \begin{tikzpicture}
        \node [above right, inner sep=0pt]{
          \includegraphics[width=\linewidth]{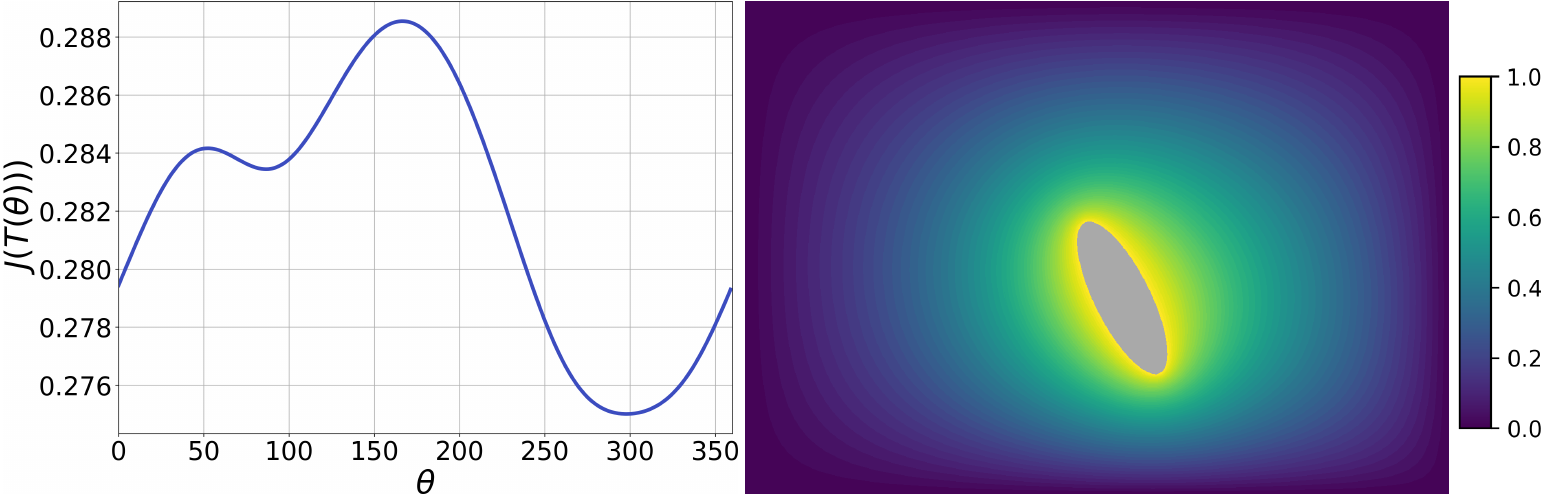}
      };
    \node (B) at (0.25\linewidth,-0.25) {(a)};
    \node (B) at (0.75\linewidth,-0.25) {(b)};
  \end{tikzpicture}
    \caption{(a) The functional $\hat J$ as a function of the rotation angle $\theta$.
      (b) The optimal orientation of the obstacle, with angle
      $\theta=296.6$ where the domain for the initial domain is $\theta=0.0$.
      The optimized orientation was achieved after $5$ steepest-descent iterations, and
      a total of $12$ functional evaluations, including those in the Armijo linesearch.}\label{fig:SimpleOpt}
\end{figure}

\section{Numerical examples}\label{sec:numerical_examples}

\subsection{Implementation}
The MultiMesh FEM is implemented in the finite element framework FEniCS~\cite{alnaes2015fenics,logg2010dolfin}, and will be released in version 2018.1.0. All meshes used in this paper have been generated with GMSH, version 3.0.6~\cite{gmsh}. The complete code of the examples are published at \href{https://bitbucket.org/dokken/multimeshshapeopt}{Bitbucket} and access can be granted by emailing the corresponding author.

\subsection{Optimization of Current Carrying Multi-cables}\label{sec:multicable}

In modern cars, the number of electronic devices have been increasing,
especially in electric and hybrid cars. This means that car manufacturers must design 
wires carrying current to the different devices as compactly as possible. An example of such a multi-cable is shown in \cref{fig:multicable_example}.
 This motivates optimizing the design of such multi-cable to minimize the heat inside, see~\cite{harbrecht2016} and the references therein.
As design variables, the authors chose the position of each internal cable of the multi-cable.
Each optimization iteration results in new cable positions and a re-meshing strategy was used to 
update the mesh. Since the internal cables are translated within the multi-cable, the re-meshing step can be avoided if we apply \cref{alg:shape}.

\begin{figure}[!ht]
\centering
\includegraphics[width=0.2\linewidth]{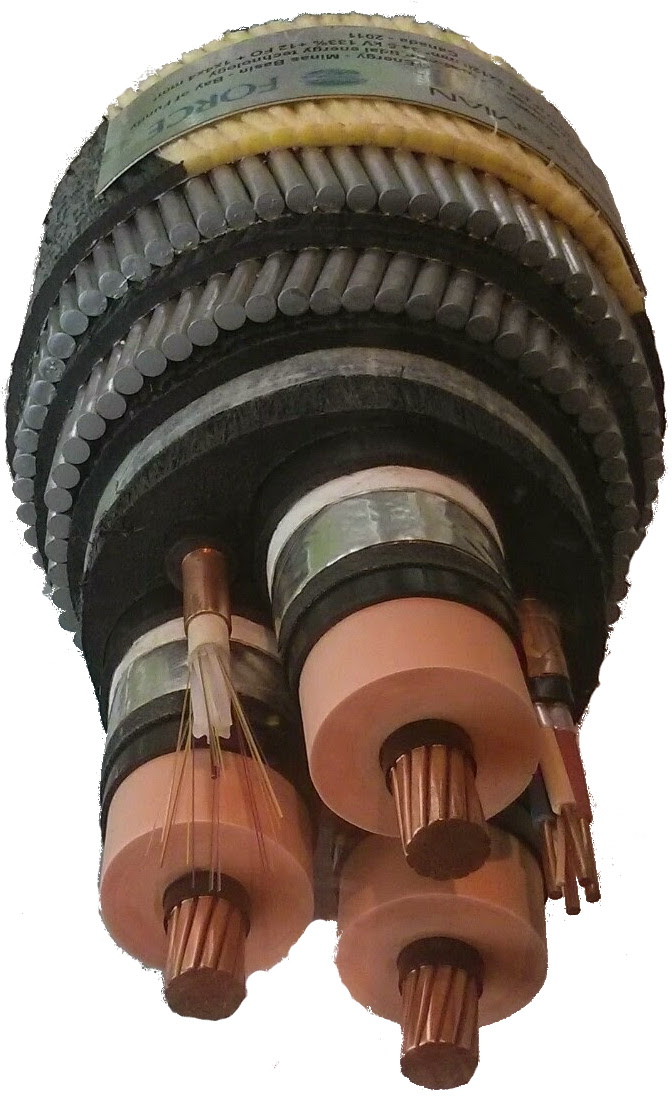}
\caption{A current carrying multi-cable as studied in \cref{sec:multicable}.}
\label{fig:multicable_example}
\end{figure}

To demonstrate this, we consider a simplified multi-cable optimization problem:
\begin{align}  \label{eq:multicable:functional}
  \min_{\Omega, T}J(\Omega,T)=\int_{\Omega}\frac{1}{q}\vert T\vert^q \md x,
  \quad q>1,
\end{align}
subject to
\begin{align}
  \begin{alignedat}{2}
    - \nabla \cdot (\lambda \nabla T) - c T &= f  && \text{ in } \Omega, \\
    \lambda \frac{\partial T}{\partial n} + (T-T^{\text{ex}}) &= 0 &&
    \text{ on } \partial\Omega,\\
         \llbracket T \rrbracket_{\Gamma_{i/e}^j} &= 0 && \text{ on } \Gamma_{i}^{j}\cup\Gamma_{e}^{j}, j=1,\dots, N,\\
           \left\llbracket\lambda\der{T}{n}\right\rrbracket_{\Gamma_{i/e}^j} &= 0 && \text{ on } \Gamma_{i}^{j}\cup\Gamma_{e}^{j}, j=1,\dots, N,
    \label{eq:multicable:state_system}
  \end{alignedat}
\end{align}
where $\Omega = \Omega_{fill} \cup \Omega_{insulation} \cup \Omega_{metal}$ describes a 2D slice through the multi-cable with $N$ internal cables, as specified in \cref{fig:MultiCableDomain} a). The operation $\llbracket \cdot \rrbracket_{\Gamma_{i/e}^j}$denotes the jump over the interface $\Gamma^j_i$ and $\Gamma^j_e$. 
The state equation is a Poisson equation with an additional linear source term with temperature coefficient $c=0.04$. This term describes the rise of electrical resistivity for increasing temperatures in conductive material. The external boundary condition is a Robin-condition, related to the air surrounding the cable, with temperature $T^{\text{ex}}=3.2$.
Furthermore, we set $q=3$ to approximate the $L^\infty$ norm, as done by ~\cite{harbrecht2016}.
A detailed derivation of these equations can be found in~\cite{loos2014joule}.
The source-term $f$ and heat-conductivity $\lambda$ are discontinuous, piece-wise constant functions with the following values:

\begin{center}
  \footnotesize
  \begin{tabular}{|c|c|c|c|}
    \hline
      & $\Omega_{fill}$ &  $\Omega_{insulation}$ & $\Omega_{metal}$ \\
      \hline
      $f$ & 0.0 & 0.0 & 50.0 \\ \hline
      $\lambda$ & 0.08 &  0.19 & 40.0\\ \hline
\end{tabular}
\end{center}
\noindent

The state equation is solved with the MultiMesh FEM for arbitrarily
many intersecting meshes, see \cref{sec:MMFEM} and~\cite{MMFEMarbit}.
There are different ways of creating the overlapping meshes for this
problem. We chose to represent the domain by one mesh for the filling material, and $N$ meshes for the inner cables,
see \cref{fig:MultiCableDomain}b).
The meshes for the internal cables include a halo of filling material, which was chosen sufficiently large so that the heat conductivity $\lambda$ is constant over the cells categorized as overlapped. This guarantees that the solution on the overlap area does not add non-physical contributions to the weak formulation.

We choose the position of each inner cable as a design variable, as described in \cref{sec:updatemesh}. 
Since the background cable is fixed, there are additional constraints on the centroids. 
The distance of each cable to origin has to be bounded, such that the cables do not move
outside the cable defined by $\Gamma^{ex}$. To enforce these
constraints, we used a projected Armijo Rule~\cite{hinze2008optimization}.
\begin{figure}[!ht]
  \centering
  \begin{tikzpicture}
    \node [above right, inner sep=0pt]{
      \includegraphics[width=\linewidth]{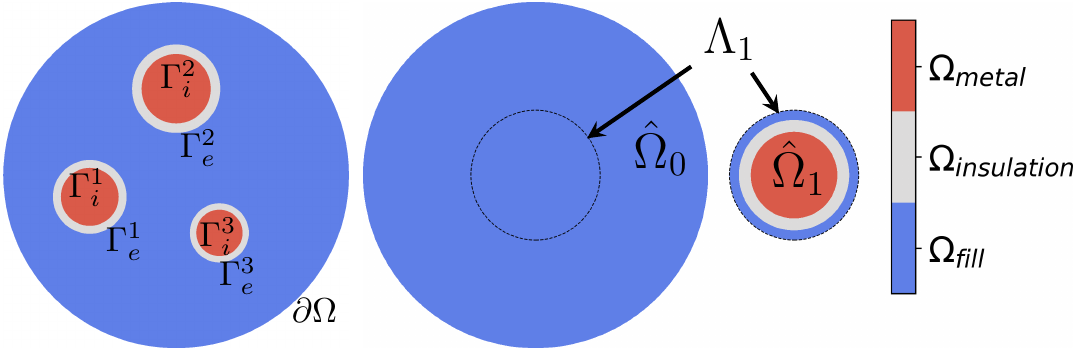}};
    \node (B) at (0.15\linewidth,-0.25) {(a)};
    \node (B) at (0.6\linewidth,-0.25) {(b)};
  \end{tikzpicture}
  \caption{(a) Illustration of the material composition of a multi-cable with annotated boundaries. (b) Illustration of how an internal cable is represented by a separate domain. Every domain includes an extra halo surrounding the cable. A continuity condition is therefore imposed for the state-equations at $\Lambda_1$.}
  \label{fig:MultiCableDomain}
\end{figure}

The adjoint equations are derived following \cref{sec:adjoint} and are:
\begin{align}
  \begin{alignedat}{2}
    -\nabla\cdot(\lambda\nabla p)-cp &= -T\vert T\vert^{q-2}&&
    \text{ in }\Omega,\\
    p&=p_{ex}&& \text{ on } \partial\Omega,\\
    \lambda\der{p}{n}+p_{ex}&=0 &&\text{ on }
    \partial\Omega,\\
    \left\llbracket \lambda\der{p}{n} \right\rrbracket_{\Gamma_{i/e}^j} &=0 && \text{ on } \Gamma_{i}^{j}\cup\Gamma_{e}^{j}, j=1,\dots, N,\\
    \llbracket p  \rrbracket_{\Gamma_{i/e}^j} &=0 && \text{ on } \Gamma_{i}^{j}\cup\Gamma_{e}^{j}, j=1,\dots, N. 
  \end{alignedat}
\end{align}
and the shape sensitivity is
\begin{align}
  \begin{alignedat}{2}
    \md J(\Omega)[s] &= \sum_{j=1}^N \Int{\Gamma_{i}^{j}\cup\Gamma_{e}^{j}}{}s\cdot n\Big(&&
     \llbracket -cTp-fp \rrbracket_{\Gamma_{i/e}^j}
  -\lambda^+\der{p^+}{n}\left\llbracket \der{T}{n} \right\rrbracket_{\Gamma_{i/e}^j}\\ 
&  && +\llbracket \lambda \rrbracket_{\Gamma_{i/e}^j}\nabla_\Gamma p^+\cdot \nabla_\Gamma T^+
  \Big) \md S,
  \end{alignedat}\label{eq:MultiCableGradient}
\end{align}
where the super-script $+$ denotes the evaluation of a function from
the fill side at $\Gamma_{e}^j$, and evaluation at the insulation side of $\Gamma_{i}^j$.

\subsubsection{Results}\label{Multicable:results}
Numerical results were performed on a multi-cable with radius $1.2$, containing $N$ internal cables with $0.2$ radius not counting insulation and a $0.055$ thick insulation. 

The adjoint equation and shape sensitivity were verified by a Taylor test, where one
inner cable without insulation was placed at $(0.03,0.2)$. 
A MultiMesh consisting of a background mesh with $33802$ elements
and $15842$ elements for the mesh of each of the internal cable was used.
The convergence rates in the steepest descent direction are shown in \cref{tab:MultiCable1Taylorsteep}.
Similar convergence rates were obtained for other perturbations,
indicating that the adjoint equation and the shape derivatives are correct.
Further tests with different mesh refinements showed that the Taylor convergence rates
decreases if the mesh is too coarse, due to the discrete inconsistency in the functional sensitivities, see \cref{sec:shape}.

\begin{table}[!ht]\centering
    \caption{Taylor test for the current carrying multi-cable example, see \cref{Multicable:results}, perturbed in the steepest descent-direction. The first order residual is defined as $R_0=\vert J(\Omega(\epsilon)[d])-J(\Omega)\vert$, and the second order residual as $R_1=\vert J(\Omega(\epsilon)[d])-J(\Omega)-\epsilon\md J(\Omega)[d]\vert$. The expected first and second order convergence are obtained for the first and second order residuals. }\label{tab:MultiCable1Taylorsteep} 
    \footnotesize
    \begin{tabular}{|c|c|c|c|c|} 
      \hline$\epsilon$ & $R_0(\epsilon)$ & order & $R_1(\epsilon)$ & order \\ 
      \hline 1.70e-04 & 2.404e+03 & - &1.089e+03 & -\\ 
      \hline $\epsilon/2$ & 9.743e+02 & 1.30 & 3.168e+02 & 1.78\\ 
      \hline $\epsilon/4$ & 4.112e+02 & 1.24 & 8.247e+01 & 1.94\\ 
      \hline $\epsilon/8$ & 1.847e+02 & 1.15 & 2.035e+01 & 2.02\\ 
      \hline $\epsilon/16$ & 8.697e+01 & 1.09 & 4.783e+00 & 2.09\\ 
      \hline $\epsilon/32$ & 4.213e+01 & 1.05 & 1.034e+00 & 2.21\\ 
      \hline\end{tabular} 
\end{table}

Next, the optimization algorithm was verified by optimizing the position of three
identical cables. For this setup, it is known that the optimal positioning of the cables form an equilateral triangle~\cite{harbrecht2016}. 
The optimization loop was terminated when the relative functional reduction dropped below a set tolerance, i.e.
$\left\vert\frac{J(\Omega^{k+1})-J(\Omega^k))}{J(\Omega^{k+1})}\right\vert<10^{-6}$. The optimal configuration was found after $62$ iterations, when the functional decreased from $3.2\cdot 10^4$ to $3.9\cdot 10^3$. The optimized angles between the cables were $58.7, 62.1, 59.3$ degrees. The initial and optimized configurations are shown in \cref{fig:MultiCable3}.

Furthermore, we compared the computational expense of the MultiMesh FEM with a traditional FEM approach.
For this comparison, we considered a problem with one internal cable. We measured the run-time of one approximate iteration, consisting of assembling and solving the state and adjoint systems, and updating the mesh.
For the MultiMesh-approach, the mesh updated consists of translating the mesh coordinates of the inner cable and to recompute the new mesh intersections.

For the traditional one mesh approach, the mesh update was performed through remeshing.
The linear systems arising in both approaches were solved using the an LU solver.
The timing results are shown in \cref{tab:Timing_Poisson}.
The results show that the assembly of the MultiMesh-system is more time consuming that the traditional one mesh approach, primarily caused by the the additional stabilization terms. However, this additional expense is outweighed by a significant lower cost for  the mesh update compared to re-meshing. Therefore, the estimated iteration cost for the MultiMesh-approach is a third of the traditional approach.

Finally, the optimization was performed on a problem with five internal cables of different sizes and with different insulation parameters. The parameters are listed in \cref{tab:MC5}. The initial and optimized cable configurations are shown in \cref{fig:MC5}. The results show that the smallest cable, Cable 5, is placed far away from the other cables. This can be explained through the fact that this cable has the lowest insulation parameter $\lambda_{iso}$ and the largest heat source $f$. The optimal solution was found after $103$ iterations, and the functional decreased from $1.1\cdot 10^5$ to $9.8\cdot 10^3$.

\begin{figure}[!ht]
  \centering
  \begin{tikzpicture}
    \node [above right, inner sep=0pt]{
      \includegraphics[width=\linewidth]{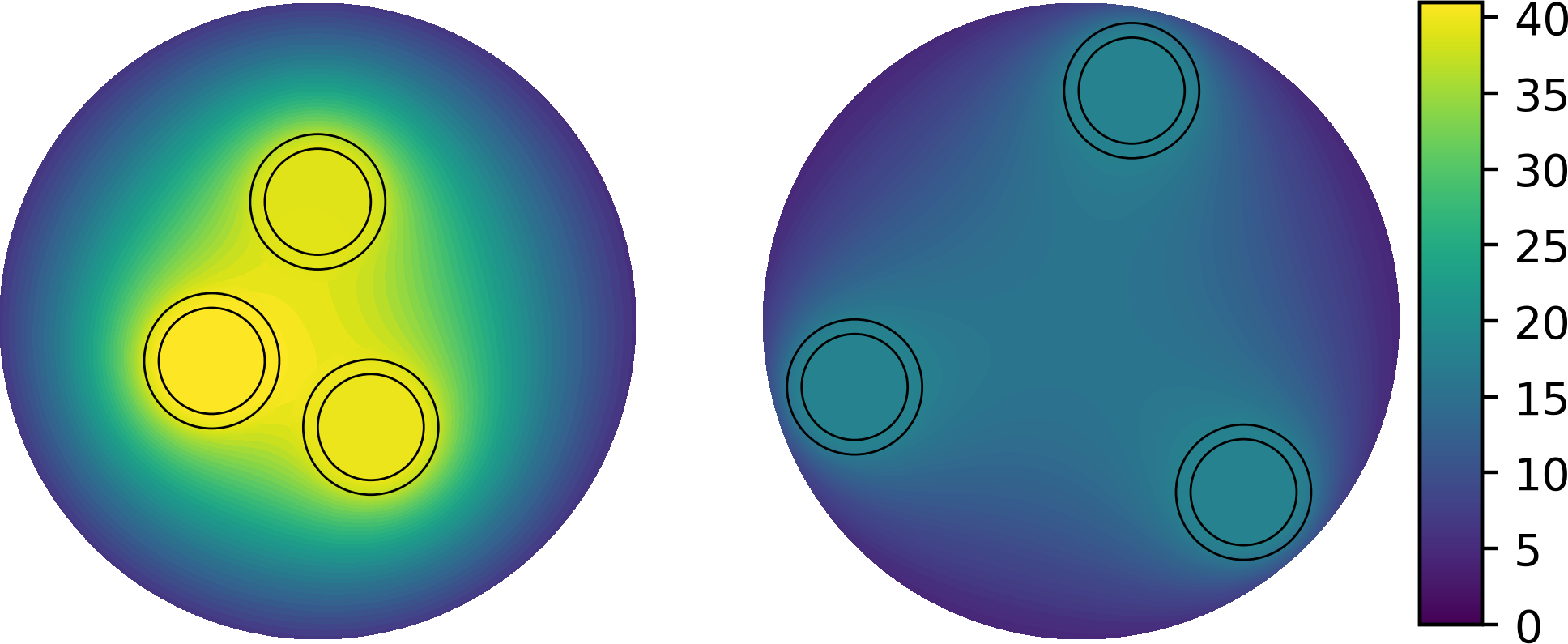}
    };
    \node (B) at (0.2\linewidth,-0.2) {(a)};
    \node (B) at (0.7\linewidth,-0.2) {(b)};
  \end{tikzpicture}
   \caption{
     (a) The cable configuration and temperature distribution the three cables before the optimization.
     (b) The cable configuration and temperature distribution after the optimization. The inner cables from an equilateral triangle.}\label{fig:MultiCable3}
\end{figure}

\begin{table}[!ht]
  \footnotesize
  \caption{The minimum time each operation takes in seconds (5 measures). The remeshing of a single mesh has been done with GMSH 3.0.6~\cite{gmsh}, where we have measured the time it takes to convert the traditional mesh geo-file to a msh file. Note that assembling the MultiMesh variational form takes more time than the traditional FEM system, due to the additional terms in the variational form. A typical iteration in an optimization algorithm (excluding linesearches) includes two assembly and solves (state and adjoint) equation, one mesh update (re-meshing or translation), and a re-computation of intersections for the multimesh (build).}\label{tab:Timing_Poisson}
  \begin{tabular}{|c|c|c|c|c|c|c|} 
    \hline  & Cells  & Assembly & Solve & Mesh Update & Build & App. It. \\ \hline 
    MultiMesh FEM & 47728 & 1.39e-01 & 1.07e-01 & 2.10e-04 & 3.81e-02 & 5.30e-01\\
    \hline Traditional FEM & 46178 & 8.44e-02 & 1.12e-01 & 1.12e+00 & - & 1.51e+00\\
\hline\end{tabular} 
\end{table}

\begin{table}[!ht]
  \centering
  \caption{The setup for the 5 multi-cable optimization shown in \cref{fig:MC5}. The parameters $\lambda_{fill},\lambda_{metal}$, are the same as in the other simulations.}\label{tab:MC5}
  \footnotesize
  \begin{tabular}{|c|c|c|c|c|c|}
    \hline   & Cable 1 & Cable 2 & Cable 3 & Cable 4 & Cable 5\\
    \hline Init. Positions & $0, 0.45$ & $-0.4,-0.15$ & $0.2,-0.4$ & $0.5,0.15$ & $0.7,-0.3$  \\
    \hline Opt. Positions & $-0.434, 0.777$ & $-0.903,  0.060$  &
    $-0.191, -0.9$ &  $0.536,  0.784$  & $0.902, -0.384$  \\
    \hline $r_{iso}$ & $0.255$ & $0.242$ & $0.218$ & $0.174$ & $0.122$ \\
    \hline $r_{met}$ & $0.2$ & $0.19$ & $0.171$ & $0.137$ & $0.096$ \\
    \hline $\lambda_{iso}$  & $0.19$ & $0.162$ & $0.133$ & $0.105$ & $0.076$ \\
    \hline $f$ & $50$ & $60$ & $70$ & $80$ & $90$ \\ \hline
  \end{tabular}
\end{table}

\begin{figure}[!ht]
  \centering
  \begin{tikzpicture}
    \node [above right, inner sep=0pt]{
      \includegraphics[width=\linewidth]{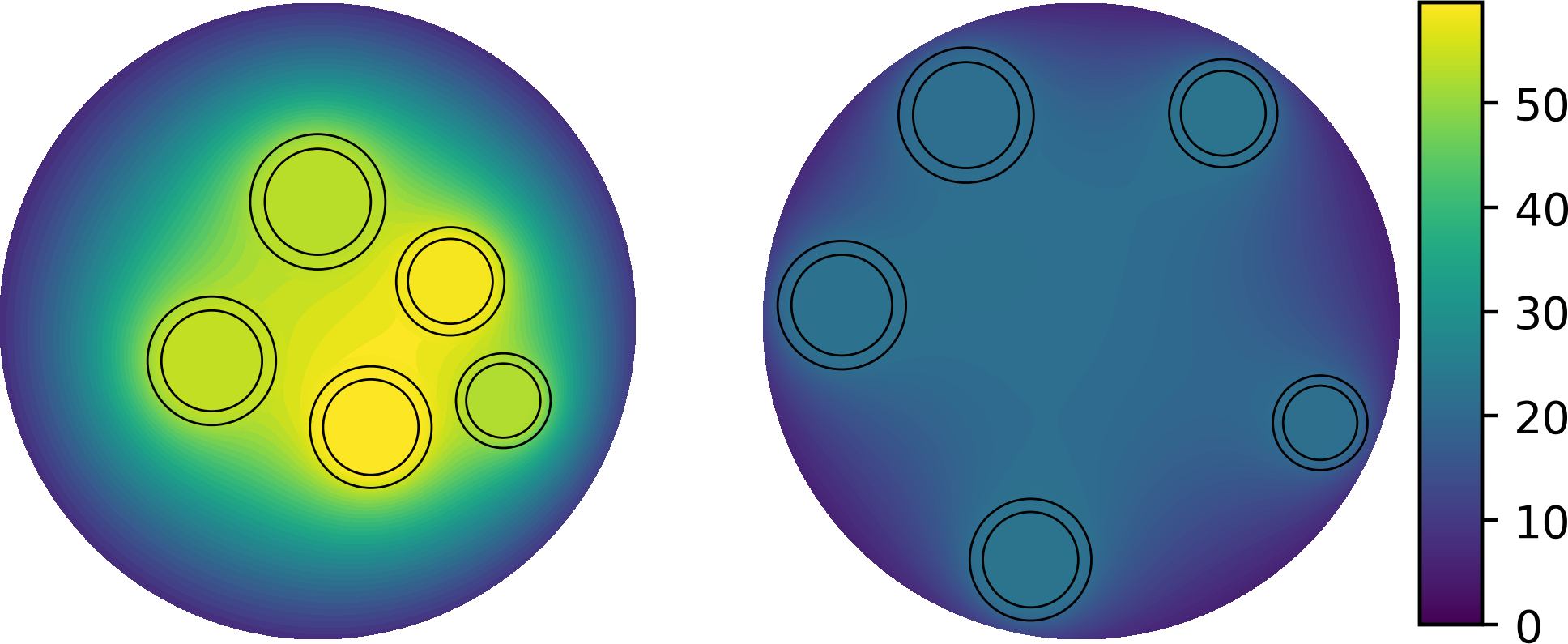}
    };
    \node (B) at (0.2\linewidth,-0.2) {(a)};
    \node (B) at (0.7\linewidth,-0.2) {(b)};
  \end{tikzpicture}
  \caption{(a) Initial configuration of five cables of different sizes and with different material parameters. (b) The optimized positioning of the cables. The smallest cable is placed as far away from the other cables because it has the lowest insulation and highest heat source.} \label{fig:MC5}
\end{figure}
\FloatBarrier

\subsection{Shape Optimization of an Obstacle in Stokes Flow}\label{sec:Stokes}
This example considers the drag minimization of an object
subject to a Stokes flow in two dimensions. This problem has a known analytical solution
first presented in~\cite{pironneau1974optimum}. 
The drag is measured by the dissipation of kinetic energy into heat, that is 
\begin{align}
  J_S &=\Int{\Omega}{} \sum_{i,j=1}^2\left(\der{u_i}{x_j}\right)^2\md x,
\end{align}
where $\der{u_i}{x_j}$ denotes the derivative of the i-th velocity component in
the j-th direction.
The domain consists of a unit square excluding an obstacle as shown in \cref{fig:channel} (a).
\begin{figure}[ht!]
  \centering
  \begin{tikzpicture}
    \node [above right, inner sep=0pt]{
      \includegraphics[width=\linewidth]{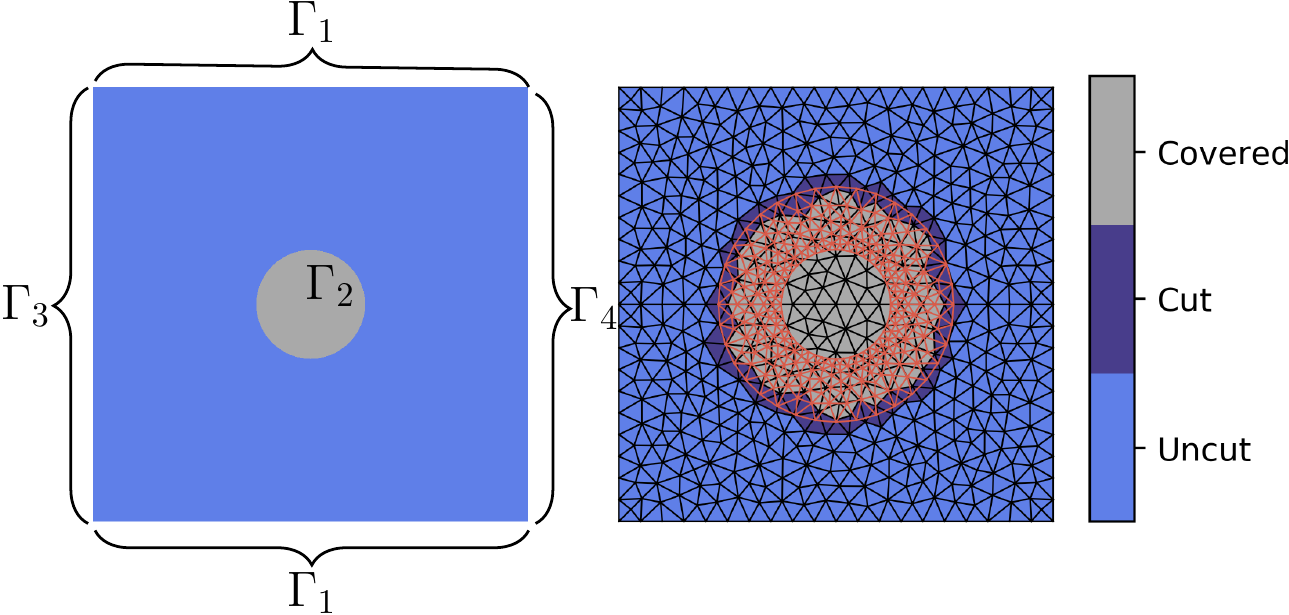}
    };
    \node (B) at (0.25\linewidth,-0.3) {(a)};
    \node (B) at (0.65\linewidth,-0.3) {(b)};
  \end{tikzpicture}
  \caption{(a) The computational domain of shape optimization problem in Stokes flow with
  labeled boundaries. 
  The fluid part of the domain is shown in blue, while the obstacle is shown in gray. 
  (b) A coarse variant of the premeshes visualized by black and red edges. The different cell-types of the background mesh are marked in colors.
  }\label{fig:channel}
\end{figure}
The trivial solution to this problem would be to remove the object from the Stokes-flow completely.
This is avoided by introducing additional constraints on the area and
centroid of the obstacle.
Denoting the target centroid and area of the obstacle as $(C_{x0},C_{y0})=(0.5,0.5)$ and the $V_O =0.05 $ respectively,
we enforce constraints with quadratic penalty terms, yielding the cost functional
\begin{equation}
  \begin{alignedat}{1}
    J_V &= \gamma_1 \left(\vert \Omega \vert - \vert \Omega_0 \vert \right)^2,\\
    J_{Cx} &= \gamma_2 (\text{C}_x-\text{C}_{x0})^2, \\
    J_{Cy} &= \gamma_2 (\text{C}_y-\text{C}_{y0})^2,\\
    J &= J_S + J_V + J_{Cx} + J_{Cy},
    \end{alignedat}
\end{equation} 
with penalty parameters $\gamma_1 > 0$ and $\gamma_2 > 0$. We denote the target fluid area as $\vert\Omega_0\vert=1-V_O=0.95$,
the actual fluid area as $\vert \Omega \vert = \int_\Omega 1 \md x$,  and
the coordinate of the obstacle's centroid as $C_x$ and $C_y$, for instance $C_x =\Big(\half-\Int{\Omega}{}x\md x \Big)
 / {\Big(1- \vert \Omega \vert \Big)}$.

To summarize, we can write the optimization problem as
\begin{align}
  \begin{split}\label{eq:Stokes:functional}
    \min_{\Omega, u} J(\Omega, u) 
  \end{split}
\end{align}
subject to
\begin{equation}
  \label{eq:Stokes:state}
  \begin{alignedat}{2}
    -\Delta  u + \nabla p &= 0 &&\quad\text{ in } \Omega,\\
    \nabla\cdot  u &= 0 &&\quad\text{ in } \Omega,\\
     u &= 0 &&\quad\text{ on }\Gamma_2,\\
     u &= u_0 &&\quad\text{ on } \Gamma_1\cup\Gamma_3,\\
    \der{u}{n} + pn &= 0 &&\quad\text{ on } \Gamma_4,
  \end{alignedat}
\end{equation}
where $p$ is the fluid pressure, $u_0$ a prescribed boundary velocity, and the domain $\Omega$ is a function of $\Gamma_2$. The boundaries $\Gamma_i, i=1,\dots,4$ are visualized in \cref{fig:channel}(a).
The problem was solved using two overlapping domains as visualized in \cref{fig:channel}(b). The annulus describing the front mesh, visualized in \cref{fig:channel}(b) was chosen to have a width of at least three cells of the background mesh. This is important so that one can identity holes added in the geometry by setting cells to be covered (See \cref{fig:CellTypes}). An interesting effect of this choice is that the front mesh scales with the mesh size, as shown in \cref{Stokesconvmesh}. This means that when we refine the mesh, the number of elements that needs to be deformed does not depend on the total number of degrees of freedom of the MultiMesh.
This means that employing the same deformation scheme on the front mesh would be much more efficient than employing it on the whole domain.

We used the variational formulation of the Stokes equations for two overlapping domains, as derived and discretized in~\cite{johansson2015high},
The shape sensitivity of $J_s$ has been derived in~\cite{schmidtphd} and is
\begin{align}\label{eq:stokesgrad}
  \md J_s(\Omega, u,p)[s]&= \Int{\Gamma_2}{}-s\cdot n
  \left(\der{ u}{n}\cdot\der{ u}{n}\right)\md S.
\end{align}
The shape sensitivity of $J_V$ is obtained by applying the product rule and \cref{HadamardVol}:
\begin{align}\label{eq:volumegrad}
  \md J_V(\Omega)[s]=-2\gamma_1(\vert\Omega\vert-\vert\Omega_0\vert)\Int{\Gamma_2}{}s\cdot n\md S.
\end{align}
Similarly, the shape sensitivity of $J_C$ is obtained using the quotient rule:
\begin{align}\label{eq:centroidgrad}
  \md J_{Cx}(\Omega)[s] &= 2\gamma_2\vert\Omega\vert^{-1}\Int{\Gamma_2}{}s\cdot n(C_x-x)(\text{C}_x-\text{C}_{x0})\md S.
\end{align}
Similar result can be derived for $\md J_{Cy}$.
Combining \cref{eq:stokesgrad,eq:volumegrad,eq:centroidgrad} and obtain the
shape sensitivity
\begin{equation}
  \begin{alignedat}{1}
  \md J(\Omega, u,p)[s]&= \Int{\Gamma_2}{}s\cdot n\Bigg(
   -\left(\der{ u}{n}\cdot\der{ u}{n}\right)
  -2\gamma_1(\vert\Omega\vert-\vert\Omega_0\vert)\\
  &+2\gamma_2\inv{\vert\Omega\vert}\Big[(C_x-x)(\text{C}_x-\text{C}_{x0})  +(C_y-y)(\text{C}_y-\text{C}_{y0})\Big]\Bigg)\md S.
  \end{alignedat}\label{eq:stokes_shape_sensitivity}
\end{equation}
We note that \cref{eq:stokes_shape_sensitivity} does not depend on the adjoint solution.
Hence, for this example one does not have to compute the solution of the adjoint equations.

\subsubsection{Results}\label{sec:stokes_res}
Both a traditional FEM and a MultiMesh FEM Stokes solver was implemented with FEniCS~\cite{alnaes2015fenics,logg2010dolfin}. The meshes used for the experiments are shown in~\cref{Stokesconvmesh}.
Both solvers use a Taylor-Hood element pair for the discretization, i.e. second order piece-wise continuous
polynomials for the velocity and first order piece-wise continuous polynomials for the pressure.
The arising linear systems were solved using the direct solver MUMPS~\cite{MUMPS:1}. 
For finer discretizations, it would be beneficial to employ an iterative solver, 
though efficient preconditioning for the MultiMesh variational problem has not been properly explored.
The following results use penalty values of $\beta = 6$ in the variational problem defined in~\cite{johansson2015high}, if not otherwise stated.

First, the convergence rates of the Stokes solvers were verified using the manufactured solution given in~\cite{johansson2015high}. For this setup, we expect third and second order convergence rates for velocity and pressure, respectively.
The results, listed in \cref{manufactured_stokes}, show that both solvers achieve the expected convergence-rates.

\begin{figure}[!ht]
  \centering
    \begin{tikzpicture}
      \node [above right, inner sep=0pt]{
        \includegraphics[width=\linewidth]{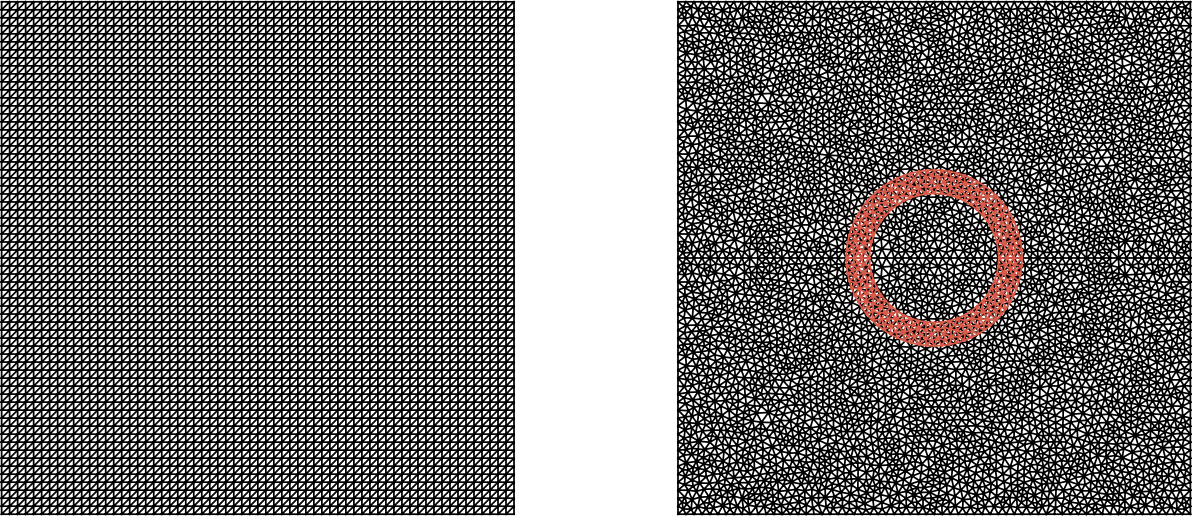}
    };
    \node (B) at (0.25\linewidth,-0.25) {(a)};
    \node (B) at (0.75\linewidth,-0.25) {(b)};
  \end{tikzpicture}
    \caption{a) The traditional FEM mesh and b) the MultiMesh FEM meshes for verification of the
      Stokes solver. For the MultiMesh approach, the front mesh is scaled to
        such that the width of the annulus is equivalent to three background cells. In this setup, no extra cells are marked as covered, and there are two interfaces, as shown in \cref{fig:CellTypes}a).}\label{Stokesconvmesh}
\end{figure}
\begin{table}[!ht]
  \centering
  \caption{Error and convergence rates for the Stokes problem. The expected convergence-rates are achieved.}\label{manufactured_stokes}
  \footnotesize
\begin{tabular}{|c|c|c|c|c|}
\hline \multicolumn{5}{|c|}{\textbf{Traditional FEM}}\\
 \hline Max Mesh size & $L^2$-error in $u$ & Rate $u$ & $L^2$-error in $p$ & Rate $p$\\ 
\hline 0.088 & 1.378e-03 & - & 7.390e-03 & - \\
\hline 0.044 & 1.693e-04 & 3.025 & 1.716e-03 & 2.107 \\
\hline 0.022 & 2.108e-05 & 3.005 & 4.140e-04 & 2.051 \\
\hline 0.011 & 2.635e-06 & 3.000 & 1.018e-04 & 2.024 \\
\hline 0.006 & 3.295e-07 & 2.999 & 2.526e-05 & 2.011 \\
\hline \end{tabular}
\begin{tabular}{|c|c|c|c|c|}
\hline \multicolumn{5}{|c|}{\textbf{MultiMesh FEM}}\\
 \hline Max Mesh size & $L^2$-error in $u$ & Rate $u$ & $L^2$-error in $p$ & Rate $p$\\ 
\hline 0.086 & 8.180e-04 & - & 1.535e-02 & - \\
\hline 0.045 & 1.049e-04 & 3.165 & 2.362e-03 & 2.884 \\
\hline 0.023 & 1.250e-05 & 3.268 & 4.583e-04 & 2.518 \\
\hline 0.012 & 1.560e-06 & 3.076 & 1.061e-04 & 2.162 \\
\hline 0.006 & 1.935e-07 & 3.183 & 2.522e-05 & 2.192 \\
\hline \end{tabular}
\end{table}

Next, the shape sensitivity was verified using a Taylor test, as shown in \cref{tab:stokestaylor}. The expected convergence rate, $2$, is obtained. Further tests in random perturbation directions showed similar convergence rates. 
We also performed Taylor tests on meshes with different resolution, which revealed that the convergence rate reduces on very coarse meshes due to the discrete inconsistency of the shape sensitivity, see~\cref{sec:shape}.

\begin{table}\centering
  \caption{Results of the Taylor tests for the deformation of an obstacle in a Stokes-flow. The first and second order residuals are defined as $R_0=\vert J(\Omega(\epsilon)[s])-J(\Omega)\vert,$ $R_1=\vert J(\Omega(\epsilon)[s])-J(\Omega)-\epsilon\md J(\Omega)[s]\vert$.
The table show the results for $s=-(1.2e+4\sin(6\pi x)+1e4\cos(0.1\pi y))n$. 
}\label{tab:stokestaylor} 
\footnotesize
\begin{tabular}{|c|c|c|c|c|} 
\hline$\epsilon$ & $R_0(\epsilon)$ & order & $R_1(\epsilon)$ & order \\ 
\hline 5.000e-06 & 3.230e+02 & - &3.099e+02 & -\\ 
\hline 2.500e-06 & 6.498e+01 & 2.31 & 5.844e+01 & 2.41\\ 
\hline 1.250e-06 & 1.579e+01 & 2.04 & 1.251e+01 & 2.22\\ 
\hline 6.250e-07 & 4.521e+00 & 1.80 & 2.886e+00 & 2.12\\ 
\hline 3.125e-07 & 1.511e+00 & 1.58 & 6.930e-01 & 2.06\\ 
\hline\end{tabular} 
\end{table}

Finally, we solved the full shape optimization problem.
To ensure that the volume and centroid constraints are sufficiently satisfied, 
we solved a sequence of optimization problems, with increasing penalty coefficients $\gamma_1$ and $\gamma_2$.
The optimized mesh of the previous problem is used as an initial mesh for the next optimization problem.
Starting with $\gamma_1=10^5$ and $\gamma_2=10^3$, five optimization problems were solved in which each $\gamma_1$ and $\gamma_2$ were doubled. The number of cut and uncut cells in the initial MultiMesh was 27490 cells.
After a total of 359 optimization iterations, we obtain the final configuration, see~\cref{fig:Stokes90}.
The functional dropped from initially $22.5$ to $18.4$. We observe that the final shape is visually in agreement with~\cite{pironneau1974optimum}, which states that the front and back angle of the object should be $90$ degrees. We measured the front and back angle of our optimized solution to be $87^\circ$ and $84^\circ$ respectively.
The mesh deformation was performed using the advection scheme \cref{eq:ConvEik}. In addition, a centroidal Voronoi tessellation~\cite{du1999centroidal} (CVT) was used on $\Gamma_2$ to preserve the mesh quality near the the front and back wedge. 
For the initial mesh, the minimum cell radius ratio was $0.58$, where an equilateral triangle has measure $1$. For the optimized mesh, this had decreased to $0.32$. 
Compared to a standard FEM, with a mesh with 25420 cells, an iteration of the optimization algorithm is approximately two thirds with the MultiMesh FEM, compared to standard approach. While the assembly of the linear system for the MultiMesh problem is 4 times slower than for the traditional FEM (1.4 seconds to 0.35 seconds), the deformation of the domain is over 20 times faster (0.35 seconds to 7.85 seconds) for the MultiMesh FEM compared to the traditional FEM. In these timings, we have excluded the execution time of the CVT, as they should be equal for two boundaries with similar resolution. The mesh quality of the optimized traditional FEM solution was 0.37.
\begin{figure}[!ht]
  \centering
  \begin{tikzpicture}
      \node [above right, inner sep=0pt]{
  \includegraphics[width=\linewidth]{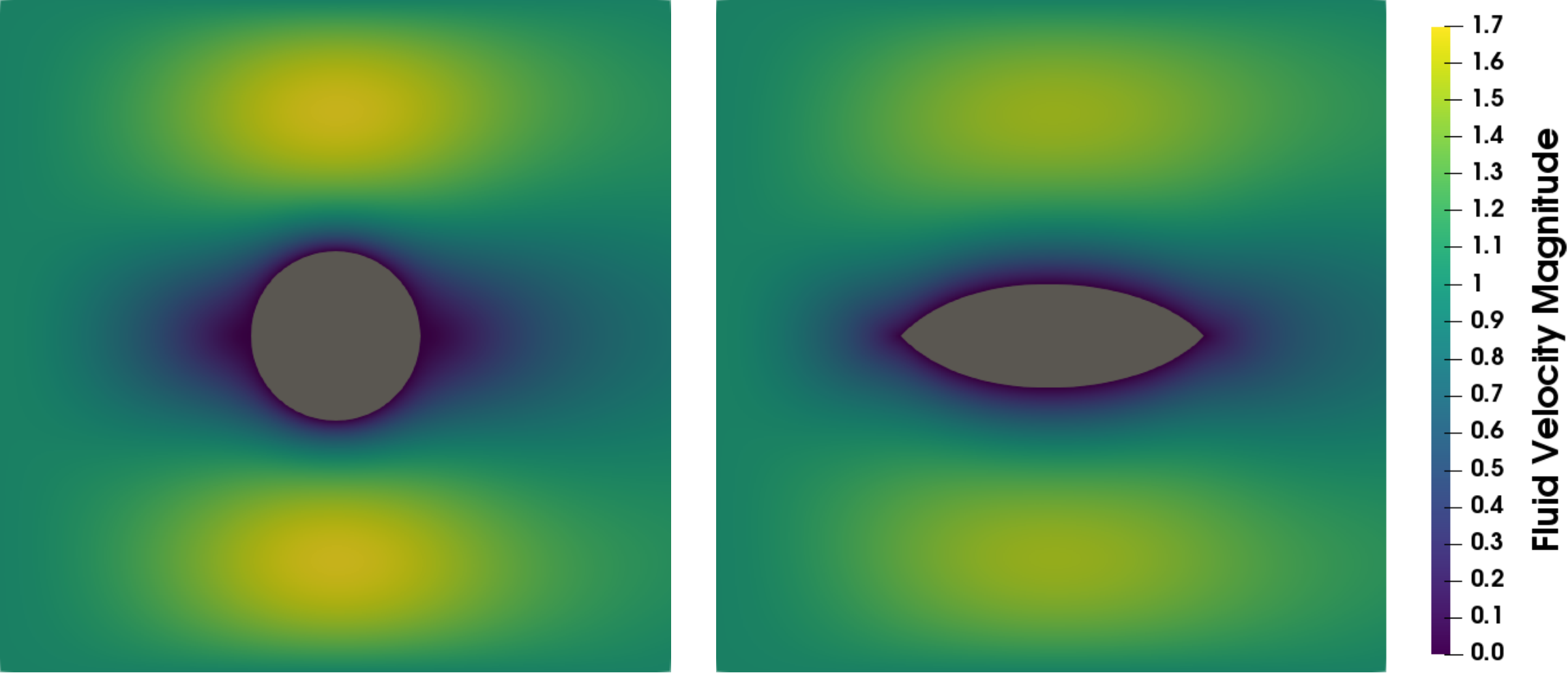}
     };
    \node (B) at (0.25\linewidth,-0.2) {(a)};
    \node (B) at (0.7\linewidth,-0.2) {(b)};
          \end{tikzpicture}
  \caption{The flow around the initial obstacle. (b) Flow-profile of the fluid around the optimized obstacle. A front and back-angle of 90 degrees are obtained, as proven analytically in Pironneau~\cite{pironneau1974optimum}.}\label{fig:Stokes90}
\end{figure}

\subsection{Orientation of 25 objects in Stokes-flow}
As a final example, we considered the problem of optimally rotate 25 obstacles in Stokes flow to minimize dissipation of energy.
We consider 25 identical objects placed in a structured fashion, as shown in \cref{fig:25stokes}a). There are two identical inlets, with parabolic inlet profiles, and one outlet, that is $66\%$ of total inlet width. The optimization was performed using a MultiMesh consisting of a total of 26 meshes, where each obstacle was represented by a separate mesh, and $22,741$ cut and uncut elements. The stopping criteria of the optimization algorithm was set to  ${(J(\Omega^{k+1})-J(\Omega^k))}/{J(\Omega^{k+1})}<10^{-5}$ and achieved after 50 iterations. The optimized configuration is shown in \cref{fig:25stokes}b).

\begin{figure}[!ht]
  \centering
  \begin{tikzpicture}
      \node [above right, inner sep=0pt]{
        \includegraphics[width=\linewidth]{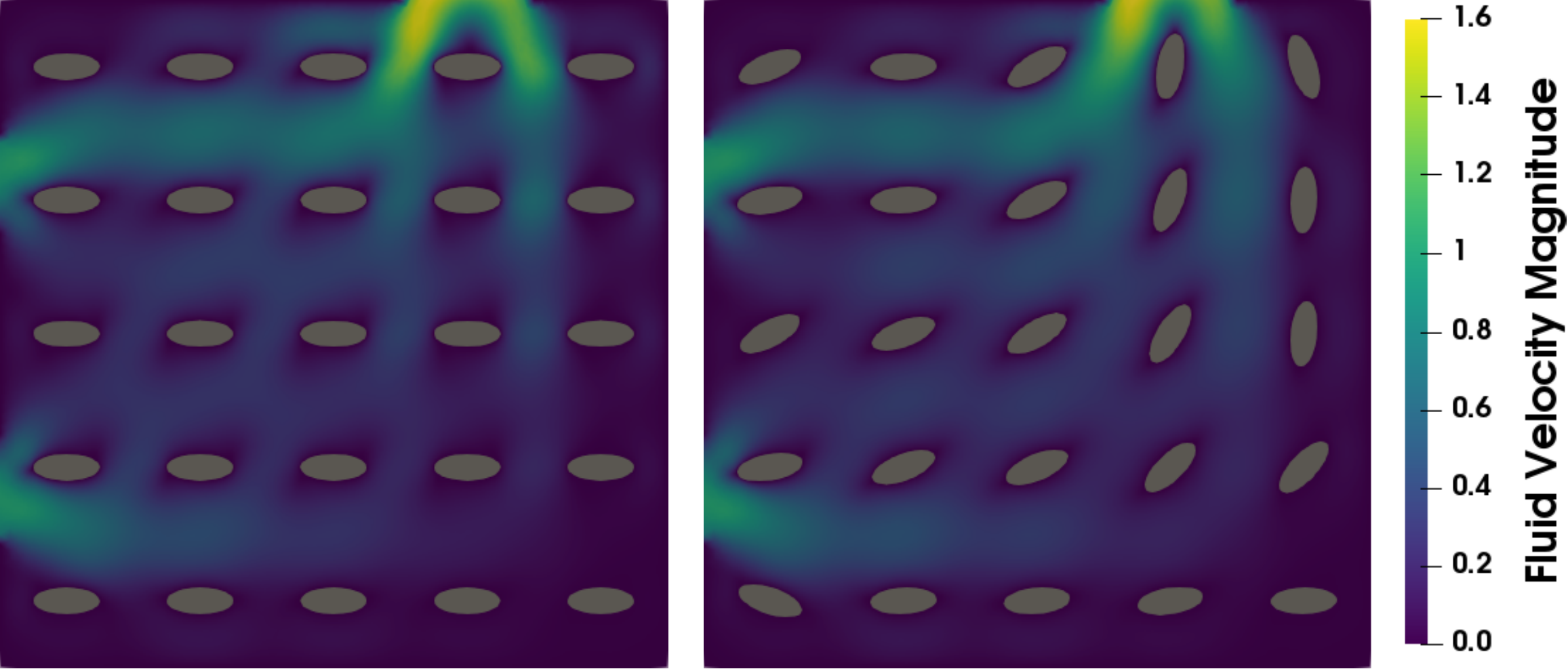}
    };
    \node (B) at (0.25\linewidth,-0.2) {(a)};
    \node (B) at (0.7\linewidth,-0.2) {(b)};
  \end{tikzpicture}
  \footnotesize
  \caption{(a) Initial configuration of 25 objects in a channel with two inlets at the left wall, and one larger outlet at the top. Initial $J_s=72.30$.(b) The optimal orientation of the 25 objects, where each are rotated around their baricenter. Optimal $J_s=56.57$. $50$ iterations where need to obtain the optimized domain. }\label{fig:25stokes}
\end{figure}

\section{Concluding remarks}\label{sec:conc}
In this paper we have combined known shape optimization techniques, with
finite element methods on multiple overlapping meshes. The key features
of this approach has been discussed. For shape optimization problem where the
change of the domain can be parameterized as a translation or rotation, we
observe that the function space of the mesh transformation can be reduced to
constant functions. This yields a big speed-up for updating the mesh, where one
avoids deformation equations or remeshing techniques.
For problems where a part of the domain can be deformed, we have showed that by choosing a meshing describing the part that is changing, one can yield big speed-ups in the mesh deformation step.

Nevertheless, since the MultiMesh FEM is a fairly new method, it has only been explored for time-independent heat and Stokes equation. Further study of Nitsche enforcement of interface conditions is needed to be able to
provide stable finite element schemes for overlapping meshes for other equations.

In conclusion, the results reported in this paper, shows that the combination
of shape optimization holds great promise as a powerful method for avoiding
deformation equations and re-meshing. In a later paper, we will extend this
approach to time-dependent problems, with more complex state-equations.



\section*{Acknowledgments}
The authors would like to acknowledge all FEniCS developers for
feedback, and especially those involved in the MultiMesh project.

\bibliographystyle{siamplain}
\bibliography{references}
\end{document}


\maketitle

\section{A detailed example}

Here we include some equations and theorem-like environments to show
how these are labeled in a supplement and can be referenced from the
main text.
Consider the following equation:
\begin{equation}
  \label{eq:suppa}
  a^2 + b^2 = c^2.
\end{equation}
You can also reference equations such as \cref{eq:matrices,eq:bb} 
from the main article in this supplement.

\lipsum[100-101]

\begin{theorem}
  An example theorem.
\end{theorem}

\lipsum[102]
 
\begin{lemma}
  An example lemma.
\end{lemma}

\lipsum[103-105]

Here is an example citation: \cite{KoMa14}.

\section[Proof of Thm]{Proof of \cref{thm:bigthm}}
\label{sec:proof}

\lipsum[106-112]

\section{Additional experimental results}
\Cref{tab:foo} shows additional
supporting evidence. 

\begin{table}[htbp]
{\footnotesize
  \caption{Example table}  \label{tab:foo}
\begin{center}
  \begin{tabular}{|c|c|c|} \hline
   Species & \bf Mean & \bf Std.~Dev. \\ \hline
    1 & 3.4 & 1.2 \\
    2 & 5.4 & 0.6 \\ \hline
  \end{tabular}
\end{center}
}
\end{table}

\bibliographystyle{siamplain}
\bibliography{references}